\newcommand{\reals}{\mathbb{R}}
\newcommand{\vz}{\bm{z}}
\newcommand{\vy}{\bm{y}}
\newcommand{\vT}{\bm{T}}
\newcommand{\vU}{\bm{U}}
\newcommand{\zer}{\mathop{\bf zer}}
\renewcommand{\paragraph}{%
  \@startsection{paragraph}{4} {\z@}
  {1ex \@plus 1ex \@minus .2ex}
  {-1ex}%
  {\bfseries}%
}                                  
\begin{document}
\title{Uniqueness of DRS as the 2 Operator Resolvent-Splitting\\
and
 Impossibility of 3 Operator Resolvent-Splitting}
\titlerunning{Uniqueness and Impossibility of 2 and 3 Operator Resolvent-Splitting}
\author{Ernest K. Ryu}
\institute{Ernest K. Ryu\at
              7324 Mathematical Sciences,\\
              UCLA\\
              Los Angeles, CA 90095 \\
              \email{eryu@math.ucla.edu}
}

\date{Received: 21 February 2018 / Accepted: 03 April 2019}
\maketitle

\begin{abstract}
Given the success of Douglas--Rachford splitting (DRS), it is natural to ask whether DRS can be generalized. Are there other 2 operator resolvent-splittings sharing the favorable properties of DRS? Can DRS be generalized to 3 operators? This work presents the answers: no and no. In a certain sense, DRS is the unique 2 operator resolvent-splitting, and generalizing DRS to 3 operators is impossible without lifting, where lifting roughly corresponds to enlarging the problem size. The impossibility result further raises a question. How much lifting is necessary to generalize DRS to 3 operators? This work presents the answer by providing a novel 3 operator resolvent-splitting with provably minimal lifting that directly generalizes DRS.

\noindent
{\color{red}(Update: A few minor typos have been corrected. The changes are highlighted in red.)}

\vspace{0.2in}

\keywords{Douglas--Rachford splitting \and Splitting methods \and Maximal monotone operators
\and Lower bounds \and First-order methods}
% \PACS{PACS code1 \and PACS code2 \and more}
\subclass{47H05 \and 47H09 \and 65K10 \and 90C25}
\end{abstract}

\section{Introduction}
In 1979, Lions and Mercier 
presented Douglas--Rachford splitting (DRS)
which solves the monotone inclusion problem 
\[
\underset{x\in \mathbb{R}^d}{\mbox{find}}\quad
0\in (A+B)x
\]
with
\[
z^{k+1}=(1-\theta/2)z^k+(\theta/2)(2J_{\alpha A}-I)(2J_{\alpha B}-I)z^k
\]
for any $\alpha>0$ and $\theta\in (0,2)$,
where $A$ and $B$ are maximal monotone operators
and $J_{\alpha A}$ and $J_{\alpha B}$ are their resolvents
 \cite{peaceman1955,douglas1956,lions1979}.
 Since its introduction, DRS has enjoyed great popularity 
and has provided great value to the field of optimization.

Given the success of DRS, one may ask the following two questions:
\begin{enumerate}
\item Are there other 2 operator resolvent-splittings?
\item Can we generalize DRS to 3 operators?
\end{enumerate}
In fact, the second question has been a long-standing open problem posed by Lions and Mercier themselves:
``[T]he convergence seems difficult to prove ... in the case of a sum of 3 operators.''
After all, identifying why a tool works and generalizing it
is a common and often fruitful exercise in mathematics.

This work presents the answers to these questions: no and no.
In a certain sense, DRS is the unique 2 operator resolvent-splitting.
In a certain sense, there is no 3 operator resolvent-splitting without lifting,
where lifting roughly corresponds to enlarging the problem size.

This impossibility result further raises the following question:
\begin{enumerate}
\item[3.] To generalize DRS to 3 operators, how much lifting is necessary?
\end{enumerate}
This work presents the answer
by providing a novel 3 operator resolvent-splitting with provably minimal lifting.

\paragraph{Background.}
To discuss what constitutes a generalization of DRS,
we first point out a few key properties of DRS.
Perhaps a generalization of DRS should satisfy these as well.
\begin{enumerate}
\item
DRS is a \emph{resolvent-splitting} in that it is constructed with scalar multiplication, addition, and resolvents.
\item
DRS is \emph{frugal} in that it uses $J_{\alpha A}$ and $J_{\alpha B}$ only once per iteration.
\item
DRS \emph{converges unconditionally} in that 
it works for any maximal monotone $A$ and $B$.
\item
DRS uses \emph{no lifting} in that
the fixed-point mapping maps from
$\mathbb{R}^d$ to $\mathbb{R}^d$,
where $x\in \mathbb{R}^d$.
In other words, DRS does not enlarge the problem size.
\end{enumerate}
%We later define these terms formally.

Consider the  
 proximal point method (PPM) \cite{martinet1970,martinet1972,rockafellar1976,brezis1978},
which finds an $x\in \reals^d$ such that $0\in Ax$
with
\[
x^{k+1}=J_{\alpha A}x^k
\]
for any $\alpha>0$ and maximal monotone $A$.
DRS generalizes PPM, and both methods are frugal, converge unconditionally, use no lifting,
and rely on resolvents.
Therefore, to require the 4 properties in a generalization of DRS seems reasonable.
%Therefore requiring the 4 properties
%is a reasonable starting point in finding a generalization of DRS.

%PPM is a frugal, unconditionally convergent resolvent-splitting without lifting.

Many other splittings have been presented since DRS,
and they have certainly provided great value to the field of optimization.
These splittings solve a wide range of  different problem classes
and are designed to be effective under a wide range of different computational considerations.
Many of them include DRS as a special case and therefore are generalizations of DRS, in that sense.
However, they do not satisfy the 4 stated properties and
therefore are not generalizations of DRS, in this sense.

Forward-backward splitting (FBS) \cite{passty1979},
\[
x^{k+1}=J_{\alpha B}(I-\alpha A)x^k,
\]
which requires $A$ to be cocoercive,
is frugal, uses no lifting, but is not a resolvent-splitting.
Primal-dual hybrid gradient method (PDHG) \cite{zhu08,pock2009,esser2010,ChaPoc11}, also known as Chambolle--Pock, 
\begin{align*}
x^{k+1}&=J_{A}(x^k-\alpha u^k)\\
u^{k+1}&=(I-J_{B})(u^k+\alpha (2x^{k+1}-x^k))
\end{align*}
is frugal but uses lifting.
Davis--Yin splitting (DYS) \cite{davis2017}, which finds an $x\in \reals^d$ such that $0\in (A+B+C)x$,
where $C$ is cocoercive,
\begin{align*}
z^{k+1}=
(I-J_{\alpha B}+J_{\alpha A}\circ(2J_{\alpha B}-I-\alpha C\circ J_{\alpha B}))z^k
\end{align*}
is frugal, uses no lifting, but is not a resolvent-splitting.
Other methods, such as 
FBFS \cite{tseng2000},
PPXA \cite{combettes2008,combettes2011},
PDFP\textsuperscript{2}O/PAPC \cite{loris2011,chen2013,driori2015},
RFBS \cite{banert2012},
Condat--V\~u \cite{condat2013,vu2013},
GFBS \cite{raguet2013},
PD3O \cite{yan2016},
%PDHG \cite{zhu08,pock2009,esser2010,ChaPoc11},
PDFP \cite{Chen2016},
%PDFP is not frugal
AFBA \cite{latafat2017},
FBHFS \cite{ba2017},
FDRS \cite{BAFDRS,Raguet2018},
FRB \cite{malitsky2018},
projective splitting \cite{Eckstein2017,Combettes2018,johnstone2018,johnstone2019},
and the methods of \cite{ba2011,Combettes2012,bot_primal-dual_2013,combettes2014}
all fail to satisfy the 4 properties.

%%FBS \cite{passty1979}
%, FBFS \cite{tseng2000},
%%DYS \cite{davis2017}
%FBHFS \cite{ba2017},
%are without lifting but are not resolvent-splittings.

\paragraph{Organization of the paper.}
In Section~\ref{s:uniqueness},
we show that 
DRS is the only frugal, unconditionally convergent resolvent-splitting without lifting 
for the 2 operator problem.
We do so by characterizing all frugal resolvent-splittings without lifting
and showing that DRS is the only one among them
that unconditionally converges.

In Section~\ref{s:impossibility},
we show that there is no resolvent-splitting without lifting for the 3 operator problem,
even if the splitting is not frugal and not convergent.
In particular, we show such a scheme without lifting
cannot be a fixed-point encoding.

In Section~\ref{s:attainment}, we define and quantify the notion of  lifting for the 3 operator problem.
We then provide a novel 
frugal, unconditionally convergent resolvent-splitting with provably minimal lifting
for the 3 operator problem that directly generalizes DRS.

\paragraph{Definitions.}
We briefly review some standard notation and results of operator theory.
Interested readers can find in-depth discussion of these concepts
in standard references such as \cite{ryu2016,BauschkeCombettes2017_convex}.

Write $\langle\cdot,\cdot\rangle$ for the standard Euclidean inner product in $\reals^d$.
We say $A$ is an operator on $\reals^d$
if $A$ maps points of $\reals^d$ to subsets of $\reals^d$.
%Write $\zer A=\{x\,|\,0\in Ax\}$ for the set of zeros of an operator $A$.
%When $A(x)$ is a singleton or empty for all $x$, we say $A$ is single-valued and identify $A$ with a function.
Given a matrix $M\in \reals^{d\times d}$ also  write $M:\reals^d\rightarrow\reals^d$
to denote the linear operator defined by the matrix $M$.
%We identify a matrix $M\in \reals^{d\times d}$ as a linear function
%$M:\reals^d\rightarrow\reals^d$,
%and we only distinguish between a matrix and its corresponding operator
%when there is a potential for confusion.
In particular, write $I$ for both the identity operator and the identity matrix.
Write $\mathcal{M}(\reals^d)$ for the set of all maximal monotone operators on $\reals^d$.
%For notational simplicity, we often drop the dependency on the dimension $d$ and write $\mathcal{M}$ for $\mathcal{M}(\reals^d)$.
For any maximal monotone operator $A$ and $\alpha>0$, write
\[
J_{\alpha A} =(I+\alpha A)^{-1}
%\qquad R_{\alpha A}=2J_{\alpha A}-I
\]
for the resolvent of $A$.
%We call $\alpha$ the step parameter.
A mapping $T:\reals^d\rightarrow\reals^d$ is nonexpansive if
\[
\|Tx-Ty\|^2\le \|x-y\|^2
\]
for all $x,y\in \reals^d$.
% A nonexpansive operator is necessarily single-valued.
A mapping $F:\reals^d\rightarrow\reals^d$ is firmly nonexpansive if
\[
\|Fx-Fy\|^2 \le \langle x-y,Fx-Fy\rangle
\]
for all $x,y\in \reals^d$.
Resolvents are firmly nonexpansive.
Given a mapping $T:\reals^d\rightarrow\reals^d$ and a starting point $z^0\in \reals^d$, we call
\[
z^{k+1}=Tz^k
\]
the fixed-point iteration with respect to $T$.
A fixed-point iteration with respect to a nonexpansive mapping
need not converge.
A mapping $T:\reals^d\rightarrow\reals^d$ is averaged if 
it can be expressed as $T=(1-\theta)I+\theta R$, where $R:\reals^d\rightarrow\reals^d$ is nonexpansive and $\theta\in (0,1)$.
Note that $R$ and $T$ share the same fixed points.
%However, an averaged operator does.
%If $T=(1-\theta)I+\theta R$, where $R$ is a nonexpansive operator and $\theta\in (0,1)$,
The fixed-point iteration with respect to an averaged mapping
$T:\reals^d\rightarrow\reals^d$
%, also called a Krasnosel'skii-Mann iteration \cite{mann1953,krasnoselskii1955},
converges in that $z^k\rightarrow z^\star$ where $Tz^\star=z^\star$, if a fixed point exists.

For any $A\in \mathcal{M}(\reals^d)$,
write $\zer A=\{x\,|\,0\in Ax\}$ for the set of zeros of $A$.
Consider the monotone inclusion problem of finding
an element of $\zer(A+B)$, where $A,B\in \mathcal{M}(\reals^d)$.
Peaceman--Rachford splitting (PRS) \cite{peaceman1955,lions1979}
is the fixed-point iteration
\[
z^{k+1}=(2J_{\alpha A}-I)(2J_{\alpha B}-I)z^k
\]
with $\alpha>0$.
PRS is not guaranteed to converge.
Douglas--Rachford splitting (DRS)
is the fixed-point iteration
\[
z^{k+1}=(1-\theta/2)z^k+(\theta/2) (2J_{\alpha A}-I)(2J_{\alpha B}-I)z^k.
\]
with $\alpha>0$ and $\theta\in (0,2)$.
(Some may call this ``relaxed PRS''.)
DRS is guaranteed to converge
in the sense that $z^k\rightarrow z^\star$ for some $z^\star$
where $J_{\alpha B}z^\star\in \zer(A+B)$, if $\zer(A+B)$ is not empty.

\section{Uniqueness of DRS as the unique
frugal, unconditionally convergent 2 operator resolvent-splitting without lifting}
\label{s:uniqueness}
In this section, we define what a
frugal, unconditionally convergent 2 operator resolvent-splitting without lifting is
and prove  DRS is the only such splitting.

\subsection{Definitions}
When reading the definitions, it is helpful to think of DRS as a specific example.
In the terminology and notation we soon establish, 
DRS is an unconditionally convergent frugal resolvent-splitting without lifting
and 
$d'=d$,
$T(A,B,z)=(1-\theta/2)I+(\theta/2)(2J_{\alpha A}-I)(2J_{\alpha B}-I)$, and
$S(A,B,z)=J_{\alpha B}$.

Given a dimension $d$,
define the problem class \eqref{eq:2op} to be the collection of monotone inclusion problems of the form
\begin{equation}
\underset{x\in \mathbb{R}^d}{\mbox{find}}\quad
0\in (A+B)x
\tag{2op-$\reals^d$}
\label{eq:2op}
\end{equation}
with $A, B\in \mathcal{M}(\reals^d)$.

\paragraph{Fixed-point encoding.}
A pair of functions $(T,S)$ is a \emph{fixed-point encoding}
for the problem class \eqref{eq:2op}
if
\[
\exists z^\star\in \reals^{d'} \text{ such that}\,
\left(
\begin{array}{ll}
T(A, B, z^\star)&=z^\star\\
S(A, B, z^\star)&=x^\star
\end{array}
\right)
\quad\Leftrightarrow\quad
0\in (A+B)(x^\star)
\]
for all $A,B\in \mathcal{M}(\reals^d)$.
We call
\[
T:\mathcal{M}(\reals^d)\times \mathcal{M}(\reals^d)\times \reals^{d'}\rightarrow \reals^{d'}
\]
the \emph{fixed-point mapping}
and
\[
S:\mathcal{M}(\reals^d)\times \mathcal{M}(\reals^d)\times \reals^{d'}\rightarrow \reals^{d},
\]
the \emph{solution mapping}.
To clarify, a fixed-point encoding is defined for the entire problem class \eqref{eq:2op},
rather than a single instance of the monotone inclusion problem.

When we fix $A,B\in \mathcal{M}(\reals^d)$,
fixed points of $T(A,B,\cdot):\reals^{d'}\rightarrow\reals^{d'}$
corresponds to zeros of $A+B$.
We say that  points in $\zer(A+B)$
 are \emph{encoded}
as fixed points of  $T(A,B,\cdot)$.
For notational simplicity, we often drop the dependency on $A$ and $B$ and write $Tz$ and $Sz$
for $T(A,B,z)$ and $S(A,B,z)$.

In this section, we only consider $d'=d$, as we limit our attention to fixed-point encodings without lifting (formally defined soon).
In general, however, the dimension $d$ of problems in \eqref{eq:2op}
and the dimension $d'$ of the fixed-point mapping need not be the same.
The purpose of allowing $d'\ne d$ will become clearer
later in Section~\ref{s:attainment},
where an analogously defined $d'$ is larger than $d$.

Under this definition, DRS is a \emph{collection} of fixed-point encodings.
For each choice of $d$, $\alpha>0$, $\theta\in(0,2)$, and $\eta\in \reals$, the pair of functions $(T,S)$ defined by
\begin{align*}
T(A,B,z)&=(1-\theta/2)z+(\theta/2) (2J_{\alpha {\color{red}B}}-I)(2J_{\alpha {\color{red}A}}-I)z,\\
S(A,B,z)&=\eta J_{\alpha A}z+(1-\eta)J_{\alpha B}(2J_{\alpha A}-I)z
\end{align*}
is a instance of DRS and it is a fixed-point encoding for the problem class \eqref{eq:2op}.

\paragraph{Frugal resolvent-splitting without lifting.}
Loosely speaking, $(T,S)$ is a \emph{resolvent-splitting} for the problem class \eqref{eq:2op} if it is a fixed-point encoding 
constructed with resolvents of $A$ and $B$,
addition, and scalar multiplication.
Loosely speaking, 
$(T,S)$ is \emph{frugal} if it uses $J_{\alpha A}$ and $J_{\beta B}$ once,
in that a single evaluation of $J_{\alpha A}$ and a single evaluation of $J_{\beta B}$
is used to evaluate \emph{both} $Tz$ and $Sz$ for any $z$.
$(T,S)$ is \emph{without lifting} if
$T(A,B,\cdot):\reals^d\rightarrow\reals^d$
and
$S(A,B,\cdot):\reals^d\rightarrow\reals^d$ for any $A,B\in \mathcal{M}(\reals^d)$,
i.e., if $d'=d$.

We now make the definitions precise.
Let $I$ be the ``identity mapping'' defined as
$I:\mathcal{M}(\reals^d)\times\mathcal{M}(\reals^d)\times\reals^d\rightarrow\reals^d$ 
and
$I(A,B,z)=z$ for any $A,B\in \mathcal{M}(\reals^d)$ and $z\in \reals^d$.
Let $J_{\alpha,1}$ be the resolvent with respect to the first operator
defined as
$J_{\alpha ,1}:\mathcal{M}(\reals^d)\times\mathcal{M}(\reals^d)\times\reals^d\rightarrow\reals^d$ 
and
$J_{\alpha ,1}(A,B,z)=J_{\alpha A}(z) $ for any $A,B\in \mathcal{M}(\reals^d)$ and $z\in \reals^d$.
Define  $J_{\beta,2}$ likewise with $J_{\beta ,2}(A,B,z)=J_{\beta B}(z) $.
% be the resolvent with respect to the second operator
% defined as
% $J_{\beta }^2:\mathcal{M}(\reals^d)\times\mathcal{M}(\reals^d)\times\reals^d\rightarrow\reals^d$ 
% and
%for any $A,B\in \mathcal{M}(\reals^d)$ and $z\in \reals^d$.
Define the class of mappings 
\[
\mathcal{F}_0=\{I\}\cup
\{J_{\alpha,1}\,|\,\alpha>0\}\cup
\{J_{\beta,2}\,|\,\beta>0\}.
\]
Recursively define
\[
\mathcal{F}_{i+1}=
\{F+G\,|\,F,G\in \mathcal{F}_{i}\}
\cup
\{F\circ G\,|\,F,G\in \mathcal{F}_{i}\}
\cup
\{\gamma F\,|\,F\in \mathcal{F}_i,\,\gamma\in \reals\}
\]
for $i=0,1,2,\dots$.
The ``composition'' $F\circ G$ is defined with
\[
(F\circ G)(A,B,z)=F(A,B,G(A,B,z))
\]
 for any $z\in \reals^d$ and $A,B\in \mathcal{M}(\reals^d)$.
Note that $\mathcal{F}_0\subset \mathcal{F}_1\subset \mathcal{F}_2\subset \cdots$. Finally define
\[
\mathcal{F}=\bigcup^\infty_{i=0}\mathcal{F}_i.
\]
To clarify, 
elements of $\mathcal{F}$ map
$\mathcal{M}(\reals^d)\times\mathcal{M}(\reals^d)\times\reals^d$
to $\reals^d$.
If $R\in \mathcal{F}$ and $A,B\in \mathcal{M}(\reals^d)$,
then $R(A,B,\cdot):\reals^d\rightarrow\reals^d$.
These mappings are constructed with (finitely many) resolvents of $A$ and $B$, addition, and scalar multiplication.

As an aside, 
we could have defined $\mathcal{F}$ as the ``near-ring''
generated by $J_{\alpha,1}$ and $J_{\beta ,2} $ for all $\alpha>0$ and $\beta>0$
and $\gamma I$ for all $\gamma\in \reals$. 
The set is not a ring because $T\circ(U+V)\ne T\circ U+T\circ V$ for non-linear functions.

We say $(T,S)$ is a \emph{resolvent-splitting without lifting} for the problem class \eqref{eq:2op},
if $(T,S)$ is a fixed-point encoding for the problem class \eqref{eq:2op}, and $T,S\in \mathcal{F}$.
(Remember, $T\in \mathcal{F}$ implies $T(A,B,\cdot):\reals^d\rightarrow\reals^d$.)

%Mappings in $\mathcal{F}$ can be computed with
%finite evaluations of resolvents of $A$ and $B$, 
%addition, and scalar multiplication.
%In fact, it is helpful to conceptualize an evaluation procedure with a 
%directed acyclic graph (DAG).

When $T,S\in \mathcal{F}$, one can evaluate $T(A,B,z)$ and $S(A,B,z)$ for given $z\in \reals^{d}$ and $A,B\in \mathcal{M}(\reals^d)$
in finitely many steps, where each step is scalar multiplication, vector addition, or a resolvent evaluation.
We say $(T,S)$ is \emph{frugal} if it has a step-by-step (serial) evaluation procedure
such that exactly one step computes $J_{\alpha A}$, exactly one step computes $J_{\beta B}$,
and both $T(A,B,z)$ and $S(A,B,z)$ are output at the end.

\paragraph{Unconditional convergence.}
We say $(T,S)$ \emph{converges unconditionally} for the problem class \textup{(2op-$\reals^d$)}
if
\[
T^kz^0 \rightarrow z^\star,\quad Sz^\star\in \zer(A+B)
\]
for any $z^0\in \reals^{d'}$ and $A,B\in \mathcal{M}(\reals^d)$
as $k\rightarrow\infty$, when $\zer(A+B)\ne\emptyset$.
To clarify, 
\[
T^k=\underbrace{T\circ T\circ \cdots \circ T}_{\text{$k$ times}}.
\]
We say the convergence is unconditional because there are no conditions on the operators $A,B\in \mathcal{M}(\reals^d)$
or the starting point $z^0\in \reals^d$.

For example, with DRS, the $z^k$-iterates do not, in general, converge to a solution.
Rather, $z^k\rightarrow z^\star$, where $J_{\alpha B}z^\star$ is a solution to the monotone inclusion problem, when a solution exists.

The notion of unconditional convergence is unrelated to weak and strong convergence.
In infinite dimensional spaces, we would require the convergence $T^kz^0 \rightarrow z^\star$ to hold weakly,
but weak and strong convergence coincide in finite dimensions.
We avoid infinite dimensional spaces because defining the notion of lifting would be awkward when $d=\infty$.

% We say the convergence is unconditional,
% because there are no conditions  
% on the monotone operators $A,B\in \mathcal{M}(\reals^d)$
% or the starting point $z^0\in \reals^d$.
% The fixed-point iteration with respect to $T$ always, unconditionally finds a fixed point corresponding to a solution
% (provided the one condition that the solution set, $\zer(A+B)$, is nonempty holds).

\paragraph{Equivalence.}
Given a fixed-point iteration, we can scale it with a nonzero scalar to get another one
that is essentially the same, i.e.,
\[
z^{k+1}=T(z^k)
\quad\Leftrightarrow\quad
az^{k+1}=aT(a^{-1} az^k)
\]
for any $a\in \reals$ such that $a \ne 0$.
Given resolvent-splitting, we can swap the role of $A$ and $B$
to get another one that is conceptually no different, i.e.,
\[
(T(A,B,\cdot),S(A,B,\cdot))
\quad\Leftrightarrow\quad
(T(B,A,\cdot),S(B,A,\cdot)).
\]
Two resolvent-splittings without lifting are \emph{equivalent}
if one can be obtained from the other through scaling with a nonzero scalar and/or swapping the role of $A$ and $B$.

\subsection{Uniqueness result}

\begin{theorem}
\label{thm:2op-necessary}
Up to equivalence, 
$(T,S)$ is a frugal resolvent-splittings without lifting for the problem class \eqref{eq:2op} 
if and only if it is of the form
\begin{align*}
x_1&=J_{\alpha A}z\\
x_2&=J_{\beta B}((1+\beta/\alpha)x_1-(\beta/\alpha) z)\\
T(z)&=z+\theta(x_2-x_1)\\
S(z)&=\eta x_1+(1-\eta)x_2
\end{align*}
for some $\alpha,\beta>0$, $\theta\ne 0$, and  $\eta\in \reals$.
\end{theorem}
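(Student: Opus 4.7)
The plan is to reduce the theorem to a finite-parameter question via a structural lemma and then pin down those parameters by testing the encoding condition against a short list of operator pairs. Because the splitting is frugal and the swap equivalence lets me assume $J_{\alpha A}$ is evaluated first, I inspect the combined evaluation tree of $T(z)$ and $S(z)$. The primitives of $\mathcal F$ supply only the linear map $I$ besides the two resolvents, so before the $J_{\alpha A}$ call every sub-expression is a scalar multiple of $z$; hence the argument of $J_{\alpha A}$ is $a_1 z$ for some $a_1\in\reals$. Between the two calls one can only build affine combinations of $\{z, x_1\}$ where $x_1 := J_{\alpha A}(a_1 z)$, so the argument of $J_{\beta B}$ is $a_2 z + b_2 x_1$. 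After both calls $T$ and $S$ are affine in $\{z, x_1, x_2\}$ with $x_2 := J_{\beta B}(a_2 z + b_2 x_1)$, giving
\[
T(z) = c_0 z + c_1 x_1 + c_2 x_2, \qquad S(z) = d_0 z + d_1 x_1 + d_2 x_2
\]
for real scalars, as a short induction on the rank in $\mathcal F_0 \subset \mathcal F_1 \subset \cdots$ confirms.

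\textbf{Testing the encoding.} I apply the fixed-point encoding condition to five instances:
(i) $A = B = 0$ forces $c_0 + c_1 a_1 + c_2(a_2 + b_2 a_1) = 1$ and the analogous $d$-sum nonzero;
(ii) $A = \partial\chi_{\{p\}},\ B = 0$ and its swap (after ruling out $a_1 = 0$ by nondegeneracy against (i), normalizing $a_1 = 1$ via the scaling equivalence, and comparing the two orientations) force $a_2 + b_2 = 1$ and $d_0 + d_1 + d_2 = 1$;
(iii) $A = \{v\},\ B = \{-v\}$ with $v \ne 0$ yields $c_2 \beta = (c_1 + c_2 b_2)\alpha$ from the $v$-component of the fixed-point equation;
(iv) $A = \partial\chi_{\{0\}},\ B = \{v\}$ with $v \ne 0$ has unique fixed point $z^\star = -\alpha v$ and $Sz^\star = 0$ gives $d_0 \alpha + d_2(\alpha a_2 + \beta) = 0$;
(v) $A = \{v_A\},\ B = \sigma I + \{v_B\}$ with $\sigma > 0$, $v_A \ne 0$ yields by explicit computation $z^\star = x^\star + (\alpha b_2 - \beta)v_A$ and $x_2 = x^\star$, so $Sz^\star = x^\star$ becomes $d_0(\alpha b_2 - \beta) = d_1(\alpha a_2 + \beta)$.

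\textbf{Solving the linear system.} Set $\gamma := \alpha a_2 + \beta$; from $a_2 + b_2 = 1$ we have $\alpha b_2 - \beta = \alpha - \gamma$. Tests (iv) and (v) become $d_0 \alpha = -d_2 \gamma$ and $d_0 \alpha = (d_0 + d_1)\gamma$; equating and using $d_0 + d_1 + d_2 = 1$ gives $\gamma = 0$, so $a_2 = -\beta/\alpha$ and $b_2 = 1 + \beta/\alpha$. Back-substitution in (iv) yields $d_0 = 0$, and plugging $b_2 = 1 + \beta/\alpha$ into (iii) gives $c_2 \beta = (c_1 + c_2)\alpha + c_2 \beta$, so $c_1 + c_2 = 0$ and $c_0 = 1$. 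Writing $\theta := c_2$ and $\eta := d_1$ recovers the theorem; $\theta \ne 0$ because $\theta = 0$ makes $T = I$, which cannot encode $\zer(A+B)$ for, say, $A = \sigma I$ with $\sigma > 0$ and $B = 0$.

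\textbf{Main obstacle.} The genuinely subtle step is test (v). Tests (i)--(iv) alone leave $b_2$ free, because $b_2$ enters only through the combinations $a_2 + b_2$ and $c_1 + c_2 b_2$ which are already constrained. The asymmetric test (v), introducing a linear term in $B$ but not in $A$, contributes the independent equation, and the algebraic identity $\alpha b_2 - \beta + \gamma = \alpha$ is what makes the two equations collapse to $\gamma = 0$. The structural reduction is elementary but must be carried out by induction on the construction of $\mathcal F$ to confirm that the frugality budget genuinely flattens the computation graph to three affine steps, with no nested composition smuggling in extra nonlinearity.
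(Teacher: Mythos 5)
Your proposal reaches the right characterization and is, in substance, correct, but it takes a genuinely different route from the paper in the step that pins down the coefficients. The paper also begins by using the frugal evaluation procedure to flatten the computation into a linear system in $z$, $x_1$, $x_2$, $Tz$, $Sz$ and the operator outputs (this is the same structural reduction as yours; note it is the evaluation procedure, not the grading $\mathcal{F}_0\subset\mathcal{F}_1\subset\cdots$, that licenses it), but it then argues abstractly: by a Farkas-type lemma, the linear system augmented with the row $Tz=z$ must \emph{linearly imply} $x_1=x_2$, $Sz=x_1$, and $\tilde{A}x_1+\tilde{B}x_2=0$, since otherwise a nullspace vector violating one of these identities can be realized by affine maximal monotone operators built from that vector, contradicting the encoding property; the values of $\theta_1,\dots,\theta_8$ are then read off from which rows can participate in such linear combinations. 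You instead pin the same coefficients by evaluating the encoding condition on a hand-picked family of instances ($A=B=0$; singleton normal cones $N_{\{p\}}$ paired with $0$ in both orders; opposite constants; $N_{\{0\}}$ with a constant; a constant with a strongly monotone affine $B$) and solving the resulting scalar equations. I checked tests (i)--(v): the claimed consequences are correct (for (ii) one needs a small case analysis on whether the $z$-coefficient of $T$ equals $1$, combined with (i) and the swapped instance, but $a_2+b_2=1$ and $d_0+d_1+d_2=1$ do follow), and the elimination via $\gamma=\alpha a_2+\beta$ is right. Your route is more elementary and yields explicit certificates; the paper's Farkas/nullspace mechanism is more conceptual and is the engine reused for the three-operator impossibility theorem, to which instance testing would not obviously scale.

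Two points need repair. First, dismissing $a_1=0$ ``by nondegeneracy against (i)'' does not work: with $A=B=0$ the parameter $a_1$ never enters the constraints, so test (i) says nothing about it. You need a dedicated instance in which the solution depends on values of $A$ away from the one point where $J_{\alpha A}$ is probed; the paper uses $A=c_1 I$ and $B\equiv c_2\ne 0$, whose solution $-c_1^{-1}c_2$ varies with $c_1$ while, if $a_1=0$, the maps $T$ and $S$ do not. Second, your computations in (iv) and (v) silently use (iii), $a_2+b_2=1$, and $c_2\ne 0$ to identify the fixed points $z^\star=-\alpha v$ and $z^\star=x^\star+(\alpha b_2-\beta)v_A$; the degenerate branch $c_2=0$ must be carried separately (by (iii) and (i) it forces $c_1=0$, $c_0=1$, i.e.\ $T\equiv I$, which is then excluded by the same kind of instance argument you invoke for $\theta\ne 0$ --- and that exclusion itself needs $d_1+d_2=1$ together with a choice of $\sigma$ making $S\not\equiv 0$). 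Finally, the theorem is an equivalence, so the easy direction --- that the displayed family is indeed a fixed-point encoding --- should be stated; it is a three-line verification, as in the paper.
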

%To clarify, if $(T,S)$ is a frugal resolvent-splittings without lifting for the problem class \eqref{eq:2op}, then it is equivalent to a splitting stated in Theorem~\ref{thm:2op-necessary}.
Note that Theorem~\ref{thm:2op-necessary} says nothing about convergence.
Theorem~\ref{thm:2op-sufficiency} characterizes the splittings of Theorem~\ref{thm:2op-necessary} that do converge.

% Since the splittings of Theorem~\ref{thm:2op-necessary} 
% are fixed-point encodings,
% a fixed-point iteration with respect to 
% $(T,S)$ of Theorem~\ref{thm:2op-necessary}
% stays at a fixed point (corresponding to a solution) if it starts at a fixed point (corresponding to a solution).
% However, we of course want a 
% fixed-point iteration to converge to a fixed-point (corresponding to a solution)
% with other starting points.
% Theorem~\ref{thm:2op-sufficiency} characterizes
% the ones that do converge.

\begin{theorem}
\label{thm:2op-sufficiency}
$(T,S)$ of Theorem~\ref{thm:2op-necessary}
converges unconditionally if and only if $\alpha=\beta$ and $\theta\in (0,2)$ if $d\ge 2$.
\end{theorem}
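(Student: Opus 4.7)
\textit{Sufficiency.} My plan for the ``if'' direction is to reduce to classical DRS. When $\alpha=\beta$, the formula $x_2=J_{\alpha B}(2x_1-z)$ holds, so
\[
T(z)=z-\theta J_{\alpha A}z+\theta J_{\alpha B}(2J_{\alpha A}-I)z=(1-\theta/2)I+(\theta/2)(2J_{\alpha B}-I)(2J_{\alpha A}-I),
\]
which is exactly the DRS operator applied to the pair $(B,A)$. Lions--Mercier then gives that $T$ is $(\theta/2)$-averaged for $\theta\in(0,2)$, hence $z^k\to z^\star$ and $J_{\alpha A}z^\star\in\zer(A+B)$. The fixed-point equation $Tz^\star=z^\star$ combined with $\theta\neq 0$ forces $x_2^\star=x_1^\star$, whence $Sz^\star=\eta x_1^\star+(1-\eta)x_2^\star=J_{\alpha A}z^\star\in\zer(A+B)$ regardless of $\eta$. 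So the scheme converges unconditionally with the intended solution extraction.

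\textit{Necessity, first failure mode.} Suppose $\alpha=\beta$ but $\theta\notin(0,2)$. I would take $A=N_V$ and $B=N_W$ on $\reals^2$ for $V,W$ the two coordinate axes; the resolvents are the coordinate projections, independent of any scale factor, and one computes directly that $T(z)=(1-\theta)z$ and that the only fixed point is the origin, which indeed lies in $\zer(A+B)=V\cap W$. Then $T^kz^0\to 0$ holds for every $z^0$ if and only if $|1-\theta|<1$; for $\theta=2$ the iteration alternates in sign, and for $\theta>2$ or $\theta\leq 0$ the norm blows up. Thus the first failure mode is exhibited on $d=2$.

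\textit{Necessity, second failure mode.} Suppose $\alpha\neq\beta$ and $\theta\neq 0$. I would set $A=(1/\alpha)K$ and $B=-(1/\beta)K$ in $\reals^2$, where $K$ is the rotation by $\pi/2$. Both operators are skew-symmetric, hence maximally monotone, and $A+B=(1/\alpha-1/\beta)K$ has unique zero at the origin. Using $K^2=-I$ one gets $J_{\alpha A}=(I-K)/2$ and $J_{\beta B}=(I+K)/2$, and substituting into the scheme of Theorem~\ref{thm:2op-necessary} collapses to
\[
T=I+\tfrac{\theta(1-\beta/\alpha)}{2}\,K,
\]
whose complex eigenvalues $1\pm i\theta(1-\beta/\alpha)/2$ have modulus $\sqrt{1+\theta^2(1-\beta/\alpha)^2/4}>1$ whenever $\theta\neq 0$ and $\beta\neq\alpha$. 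Consequently $\|T^kz^0\|\to\infty$ for every nonzero starting point, and unconditional convergence fails.

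\textit{Main obstacle.} The delicate part is isolating a counterexample that works for \emph{every} $\theta\neq 0$ in the $\alpha\neq\beta$ case; one-dimensional scalar examples always converge, and normal cones to lines at a variable angle produce a two-parameter family of $T$'s whose spectral radius stays below $1$ for a nontrivial range of $\theta$, so pure projection examples are insufficient. The trick is to introduce skew-symmetry: the scalar $\beta/\alpha$ in the update interacts with the antisymmetric part of a resolvent to yield a rotational instability no positive $\theta$ can dampen, and this is what drives the requirement $d\geq 2$.
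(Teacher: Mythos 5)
Your proposal is correct and follows essentially the same route as the paper: sufficiency by reducing to DRS (up to swapping $A$ and $B$), the case $\alpha=\beta$, $\theta\notin(0,2)$ by a normal-cone example making $T=(1-\theta)I$, and the case $\alpha\neq\beta$ by skew-symmetric (rotation) operators giving a $T$ with spectral radius exceeding $1$ — your pair $A=(1/\alpha)K$, $B=-(1/\beta)K$ is exactly the paper's counterexample family at $\omega=\pi/4$. The only cosmetic omission is the padding of the $2$-dimensional counterexamples with zeros to cover every $d>2$, which the paper states explicitly.
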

%, since monotone inclusion problems in 1 dimension are not very interesting.
%To clarify, Theorem~\ref{thm:2op-sufficiency} is saying that if we fix $\alpha$, $\beta$, $\theta$, and $\eta$ in the splitting of Theorem~\ref{thm:2op-necessary} and if $\alpha\ne \beta$ or $\theta\notin (0,2)$, then there is a pair of monotone operators such that $(T,S)$
When $d=1$, the splittings $(T,S)$ of Theorem~\ref{thm:2op-necessary} may converge under more general conditions, but we do not pursue this discussion.

\begin{corollary}
Up to equivalence, the class of DRS splittings (the collection parameterized by $\alpha>0$, $\theta\in(0,2)$, and $\eta\in \reals$)
are the only frugal, unconditionally convergent resolvent-splittings without lifting for the problem class \eqref{eq:2op} when $d\ge 2$.
\end{corollary}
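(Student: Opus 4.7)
The plan is to read this corollary as a direct compilation of Theorem~\ref{thm:2op-necessary} and Theorem~\ref{thm:2op-sufficiency}, followed by a small algebraic check that the parameters surviving both theorems cut out exactly the DRS family.

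First, I would apply Theorem~\ref{thm:2op-necessary} to reduce, up to equivalence, to the canonical four-parameter form $(\alpha, \beta, \theta, \eta)$ with $\alpha, \beta > 0$, $\theta \ne 0$, $\eta \in \reals$. Next, I would invoke Theorem~\ref{thm:2op-sufficiency} to discard every member of this family except those with $\alpha = \beta$ and $\theta \in (0, 2)$ (the hypothesis $d \ge 2$ enters exactly here). The remaining task is to verify that this surviving three-parameter family $(\alpha, \theta, \eta)$ is, up to equivalence, the DRS family defined after the \emph{Frugal resolvent-splitting without lifting} paragraph of Section~\ref{s:uniqueness}.

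For this last identification, I would substitute $\alpha = \beta$ into the canonical form: then $x_2 = J_{\alpha B}(2J_{\alpha A} - I)z$ and
\[
T(z) = z + \theta\bigl(J_{\alpha B}(2J_{\alpha A} - I)z - J_{\alpha A}z\bigr).
\]
Expanding $(2J_{\alpha B} - I)(2J_{\alpha A} - I)z = 2J_{\alpha B}(2J_{\alpha A} - I)z - 2J_{\alpha A}z + z$ and comparing, this is the same as $(1-\theta/2)z + (\theta/2)(2J_{\alpha B} - I)(2J_{\alpha A} - I)z$; after swapping the roles of $A$ and $B$ (permitted by the equivalence relation), this is exactly the DRS $T$-mapping with parameters $(\alpha, \theta)$. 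The solution mapping $S(z) = \eta J_{\alpha A}z + (1-\eta)J_{\alpha B}(2J_{\alpha A} - I)z$ matches the DRS $S$-mapping under the same swap, and the correspondence on parameters $(\alpha, \theta, \eta)$ is clearly bijective.

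I do not anticipate a genuine obstacle: all the hard work sits inside Theorems~\ref{thm:2op-necessary} and~\ref{thm:2op-sufficiency}, and the corollary itself is just the compilation plus the one-line algebraic identity above.
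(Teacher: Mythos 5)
Your proof is correct and takes essentially the same approach as the paper, which simply combines Theorem~\ref{thm:2op-necessary} and Theorem~\ref{thm:2op-sufficiency} and then asserts the surviving parameters correspond to DRS. You additionally spell out the algebraic identity showing $z + \theta(J_{\alpha B}(2J_{\alpha A}-I)z - J_{\alpha A}z) = (1-\theta/2)z + (\theta/2)(2J_{\alpha B}-I)(2J_{\alpha A}-I)z$, which the paper leaves implicit.
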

\begin{proof}
A frugal resolvent-splittings without lifting must be equivalent to a splitting of the form of Theorem~\ref{thm:2op-necessary}.
If it is furthermore unconditionally convergent, then, by  Theorem~\ref{thm:2op-sufficiency}, we have $\alpha=\beta$ and $\theta\in(0,2)$, which corresponds to DRS.
\qed
\end{proof}
% A frugal resolvent-splittings without lifting must be equivalent to a splitting of the form of Theorem~\ref{thm:2op-necessary}.
% If it is furthermore unconditionally convergent, then it must DRS by Theorem~\ref{thm:2op-sufficiency}.

% When $\alpha=\beta$, the splitting $(T,S)$ of Theorem~\ref{thm:2op-necessary}
% is DRS.% (or relaxed PRS).
% Theorem~\ref{thm:2op-necessary} characterizes all frugal resolvent-splittings without lifting
% for the problem class \eqref{eq:2op},
% and Theorem~\ref{thm:2op-sufficiency} states that among them only 
% DRS converges unconditionally,  up to equivalence, when $d\ge 2$.
% Therefore, DRS is  the collection of all frugal, unconditionally convergent resolvent splittings without lifting 
% for the problem class \eqref{eq:2op}, when $d\ge 2$.

\subsection{Proof of Theorem~\ref{thm:2op-necessary}}
\subsubsection{Outline}
The main part of proof, which shows that any frugal resolvent-splitting without lifting is of the form of Theorem~\ref{thm:2op-necessary}, can be divided into in roughly three steps.
In the first step, we represent a given resolvent-splitting $(T,S)$ with a linear system of equations, and simplify the system using Gaussian elimination.
In the second step, we show that the system of linear equalities must imply certain equalities one would expect from a fixed-point encoding.
This is done by using a Farkas-type lemma 
to take a certain element from the null space of the linear system and using it to construct the counter example.
In the third step, we use the conclusion of the second step to eliminate and characterize the parameters of $(T,S)$.

\subsubsection{Proof}

Showing that $(T,S)$ of Theorem~\ref{thm:2op-necessary} is indeed a fixed-point encoding is straightforward.
Let $x^\star\in\reals^d$ satisfy $0\in (A+B)x^\star$.
Let $\tilde{A}x^\star\in Ax^\star$ and 
$\tilde{B}x^\star\in Bx^\star$ such that $\tilde{A}x^\star+\tilde{B}x^\star=0$,
and let $z_0 =x^\star+\alpha \tilde{A}x^\star$.
Then $x_1=x_2=x^\star$, $Tz_0=z_0$, and $Sz_0=x^\star$.
On the other hand, assume $T(A,B,z^\star)=z^\star$.
Then $x_1=x_2$. Write $x^\star=x_1=x_2$,
$\tilde{A}x^\star=(1/\alpha)(z^\star-x^\star)$,
and
$\tilde{B}x^\star=(1/\alpha)(x^\star-z^\star)$.
Then
$\tilde{A}x^\star\in Ax^\star$, 
$\tilde{B}x^\star\in Bx^\star$, and
$\tilde{A}x^\star+\tilde{B}x^\star=0$,
which implies $x^\star=S(z^\star)$ is a solution.

We now need to show the other direction, that any frugal resolvent-splitting without lifting for the problem class \eqref{eq:2op}
is of the form of Theorem~\ref{thm:2op-necessary}, up to equivalence.

First, we discuss the following Farkas-type lemma.
\begin{lemma}
\label{lm-ge}
Let $M\in \reals^{m\times n}$ and $c\in \reals^{n}$ be fixed coefficients,
and let $v\in \reals^n$ be a variable.
If there is a $w\in \reals^{ m}$ such that $w^TM=c^T$ then the linear equalities $Mv=0$ imply the linear equality $c^Tv=0$.
If there is no such $w$, then there is an instance of $v\in \reals^n$ such that $Mv=0$ but $c^Tv\ne 0$.
\end{lemma}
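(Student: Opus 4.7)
The plan is to recognize this as the standard duality between the row space and the null space of $M$, namely the fundamental theorem of linear algebra: $\reals^n$ decomposes orthogonally as $\mathrm{range}(M^T)\oplus \mathrm{null}(M)$. The hypothesis ``there exists $w$ with $w^T M = c^T$'' is precisely the statement $c\in \mathrm{range}(M^T)$, while the conclusion ``$Mv=0$ implies $c^Tv=0$'' is precisely the statement $c\in \mathrm{null}(M)^\perp$. So the lemma just asserts $\mathrm{range}(M^T)=\mathrm{null}(M)^\perp$ in contrapositive form.

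For the first (easy) direction, I would simply substitute: if $w^T M = c^T$ and $Mv=0$, then $c^T v = w^T M v = w^T(Mv) = 0$. This is a one-line check requiring no machinery.

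For the second direction I would argue by contrapositive, constructing an explicit $v$. Assume no such $w$ exists, i.e., $c \notin \mathrm{range}(M^T)$. Orthogonally project $c$ onto $\mathrm{range}(M^T)$ to write $c = c_{\parallel} + c_{\perp}$ with $c_{\parallel}\in \mathrm{range}(M^T)$ and $c_{\perp}\in \mathrm{range}(M^T)^{\perp}=\mathrm{null}(M)$, and note $c_{\perp}\ne 0$ by assumption. Then set $v=c_{\perp}$: by construction $Mv=0$, while $c^T v = c_{\parallel}^T c_{\perp} + \|c_{\perp}\|^2 = \|c_{\perp}\|^2 > 0$, giving the required counter-instance.

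There is no real obstacle here; the only thing to be careful about is invoking the orthogonal decomposition in the correct direction (that $\mathrm{null}(M)^{\perp}\subseteq \mathrm{range}(M^T)$, not only the reverse inclusion which is the trivial direction). Alternatively, one could cite Farkas' lemma or the closed-range theorem in finite dimensions, but writing the projection explicitly keeps the proof self-contained and elementary.
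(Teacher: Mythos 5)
Your proof is correct, and since the paper does not spell out a proof (it only remarks that the lemma ``can be directly and easily proved with standard linear algebra''), your argument via the orthogonal decomposition $\reals^n=\mathrm{range}(M^T)\oplus\mathrm{null}(M)$ is exactly the standard elementary route the paper had in mind.
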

An equivalent way to state Lemma~\ref{lm-ge} is to say that $Mv=0$ implies $c^Tv=0$ if and only if we can linearly combine the rows of $Mv=0$ to obtain $c^Tv=0$.
%, i.e., if is a $w\in \reals^{ m}$ such that  $w^TM=c^T$.
We say Lemma~\ref{lm-ge} is of Farkas-type as it resembles Farkas' result on systems of linear inequalities \cite{Farkas1902}. For a systematic study on Farkas-type theorems, see \cite{bot_farkas_2005,dinh_farkas_2007}.
Lemma~\ref{lm-ge} can be directly and easily proved with standard linear algebra.

% Let $M\in \reals^{m\times n}$ and $c\in \reals^{n}$ be fixed coefficients,
% and let $v\in \reals^n$ be a variable.

% To clarify, $Mv=0$ implies $c^Tv=0$ if all 
% instances of the variable $v\in \reals^n$ satisfying $Mv=0$ satisfies $c^Tv=0$.
% If $Mv=0$ does not imply $c^Tv=0$, there is an
% instance of $v\in \reals^n$ such that $Mv=0$ but $c^Tv\ne 0$.

%\begin{proof}[Proof of Lemmma~\ref{lm-ge}]
%If there is a $w$ such that $w^TM=c^T$, then $Mv=0$ implies $c^Tv=w^TMv=0$.
%If there is no such $w$, then $c\notin \mathcal{R}(M^T)$
%and there is a $v\in \mathcal{N}(M)$ such that $c^Tv\ne 0$,
%where $\mathcal{R}$ and $\mathcal{N}$ respectively denote the range and null space.
%\qed
%\end{proof}

We now proceed onto the main proof.
Let $(T,S)$ be a frugal resolvent-splitting without lifting.
%Then $T,S\in \mathcal{F}$,

% Consider an evaluation procedure of $(T,S)$ and its DAG
% that establishes frugality.
% The DAG has one node each for the evaluations of $J_{\alpha A}$ and $J_{\beta B}$.
% With this DAG, we can find a step-by-step ordering of the computational steps.
% This ordering should respect the computational dependency represented in the DAG.
% Such an ordering is not unique.
% In this ordering, either $J_{\alpha A}$ or $J_{\beta B}$ is evaluated before the other.
% Without loss of generality, assume $J_{\alpha A}$ is evaluated before $J_{\beta B}$
% in this ordering,
% since we can otherwise consider the equivalent splitting $(\tilde{T},\tilde{S})$ defined with
% \[
% \tilde{T}(A,B,z)=T(B,A,z),\qquad \tilde{S}(A,B,Z)=S(B,A,z).
% \]

Consider an evaluation procedure of $(T,S)$ that establishes frugality.
In the step-by-step computation, either $J_{\alpha A}$ or $J_{\beta B}$ is evaluated before the other.
Without loss of generality, assume $J_{\alpha A}$ is evaluated before $J_{\beta B}$
in this ordering, since we can otherwise consider  $(\tilde{T},\tilde{S})$ defined with
\[
\tilde{T}(A,B,z)=T(B,A,z),\qquad \tilde{S}(A,B,Z)=S(B,A,z),
\]
the equivalent splitting with the order of $A$ and $B$ swapped.
%The definition of ``frugal resolvent-splitting without lifting''
%is symmetric with respect to the maximal monotone operators $A$ and $B$,

Consider the evaluation of $Tz_0$ and $Sz_0$ for $z_0\in \reals^d$.
Write $z_1$ and $z_2$ for the inputs and $x_1$ and $x_2$ for the outputs of the resolvent evaluations with respect to $A$ and $B$, i.e., $x_1=J_{\alpha A}z_1$ and  $x_2=J_{\beta B}z_2$.
%at which $J_{\alpha A}$ and $J_{\beta B}$ are evaluated.
Define $\tilde{A}x_1$ and $\tilde{B}x_2$ with
$x_1+\alpha \tilde{A}x_1=z_1$ and $x_2+\beta \tilde{B}x_2=z_2$.
By definition of resolvents, we have $\tilde{A}x_1\in Ax_1$
and $\tilde{B}x_2\in Bx_2$.

All computational steps except the evaluations of $J_{\alpha A}$ and $J_{\beta B}$ amount to forming linear combinations of 
previous information since scalar multiplication and vector addition are the only other operations allowed in $(T,S)$.
Therefore,  we can express the evaluation of $Tz_0$ and $Sz_0$ as
% \[
% \setlength\arraycolsep{5pt}
% 0=
% \begin{bmatrix}
% *I_d&I_d&0_d&0_d&0_d&0_d&0_d&0_d&0_d\\
% 0_d&-I_d&I_d&0_d&0_d&0_d&\alpha I_d&0_d&0_d\\
% *I_d&*I_d&*I_d&I_d&0_d&0_d&0_d&0_d&0_d\\
% 0_d&0_d&0_d&-I_d&I_d&0_d&0_d&\beta I_d&0_d\\
% *I_d&*I_d&*I_d&*I_d&*I_d&I_d&*I_d&*I_d&0_d\\
% *I_d&*I_d&*I_d&*I_d&*I_d&*I_d&*I_d&*I_d&I_d
% \end{bmatrix}
% \begin{bmatrix}
% z_0\\
% z_1\\
% x_1\\
% z_2\\
% x_2\\
% Tz_0\\
% \tilde{A}x_1\\
% \tilde{B}x_2\\
% Sz_0
% \end{bmatrix},
% \]
% where $*$ denotes an unspecified scalar coefficient,
% $I_d$ denotes the $d\times d$ identity matrix,
% and $0_d$ denotes the $d\times d$ zero matrix.
% We reduce the notational burden by replacing
% a scalar multiplied by $I_d$ with just the scalar
% and write
\begin{equation}
\setlength\arraycolsep{5pt}
0=
\begin{bmatrix}
*&1&0&0&0&0&0&0&0\\
0&-1&1&0&0&0&\alpha&0&0\\
*&*&*&1&0&0&0&0&0\\
0&0&0&-1&1&0&0&\beta&0\\
*&*&*&*&*&1&*&*&0\\
*&*&*&*&*&*&*&*&1
\end{bmatrix}
\begin{bmatrix}
z_0\\
z_1\\
x_1\\
z_2\\
x_2\\
Tz_0\\
\tilde{A}x_1\\
\tilde{B}x_2\\
Sz_0
\end{bmatrix}.
\label{eq:eq1}
\end{equation}
Each scalar in the matrix represents a $d\times d$ block.
The symbol $*$ denotes a fixed scalar coefficient that we have not yet parameterized.
Row 1 defines $z_1$, the input to $J_{\alpha A}$.
Row 2 represents $x_1=J_{\alpha A}z_1\,\Leftrightarrow \, x_1+\alpha Ax_1\ni z_1$.
Row 3 defines $z_2$, the input to $J_{\beta B}$.
Row 4 represents $x_2=J_{\beta B}z_2\,\Leftrightarrow \, x_2+\beta Bx_2\ni z_2$.
Row 5 defines $Tz_0$.
Row 6 defines $Sz_0$.
We will simplify the system first and then explicitly parameterize the coefficients to keep the notation tractable.

Next, we simplify the system \eqref{eq:eq1}.
Permute the rows of \eqref{eq:eq1} to get the equivalent linear system
\begin{equation}
\setlength\arraycolsep{5pt}
0=
\begin{bmatrix}
*&1&0&0&0&0&0&0&0\\
*&*&*&1&0&0&0&0&0\\
*&*&*&*&*&1&*&*&*\\
0&-1&1&0&0&0&\alpha&0&0\\
0&0&0&-1&1&0&0&\beta&0\\
*&*&*&*&*&*&*&*&1
\end{bmatrix}
\begin{bmatrix}
z_0\\
z_1\\
x_1\\
z_2\\
x_2\\
Tz_0\\
\tilde{A}x_1\\
\tilde{B}x_2\\
Sz_0
\end{bmatrix}.
\label{eq:eq2}
\end{equation}
Since permuting the rows is a reversible process, \eqref{eq:eq1} and \eqref{eq:eq2} are equivalent.
% Equivalently, we can left-multiplication by the invertible matrix permutation matrix
% \[
% \setlength\arraycolsep{5pt}
% \begin{bmatrix}
% 1&0&0&0&0&0\\
% 0&0&1&0&0&0\\
% 0&0&0&0&1&0\\
% 0&1&0&0&0&0\\
% 0&0&0&1&0&0\\
% 0&0&0&0&0&1
% \end{bmatrix}.
% \]
% to \eqref{eq:eq1} to obtain \eqref{eq:eq2}.
In the step-by-step evaluation procedure of $(T,S)$, the evaluation of $T$ or $S$ completes before the other.
As the first case, assume the evaluation of $S$ completes first, which means the evaluation of $S$ does not depend on the evaluation of $T$.
Then the linear system is of the form
\[
\setlength\arraycolsep{5pt}
0=
\begin{bmatrix}
*&1&0&0&0&0&0&0&0\\
*&*&*&1&0&0&0&0&0\\
*&*&*&*&*&1&*&*&*\\
0&-1&1&0&0&0&\alpha&0&0\\
0&0&0&-1&1&0&0&\beta&0\\
*&*&*&*&*&\bm{0}&*&*&1
\end{bmatrix}
\begin{bmatrix}
z_0\\
z_1\\
x_1\\
z_2\\
x_2\\
Tz_0\\
\tilde{A}x_1\\
\tilde{B}x_2\\
Sz_0
\end{bmatrix}.
\]
The \textbf{boldface symbols} denote where to pay attention in the linear systems.
Perform Gaussian elimination to get
\[
\setlength\arraycolsep{5pt}
0=
\begin{bmatrix}
*&1&0&0&0&0&0&0&0\\
*&*&*&1&0&0&0&0&0\\
*&*&*&*&*&1&*&*&\bm{0}\\
0&-1&1&0&0&0&\alpha&0&0\\
0&0&0&-1&1&0&0&\beta&0\\
*&*&*&*&*&0&*&*&1
\end{bmatrix}
\begin{bmatrix}
z_0\\
z_1\\
x_1\\
z_2\\
x_2\\
Tz_0\\
\tilde{A}x_1\\
\tilde{B}x_2\\
Sz_0
\end{bmatrix}.
\]
This corresponds to left-multiplication by the invertible matrix
\[
\setlength\arraycolsep{3pt}
\begin{bmatrix}
1&0&0&0&0&0\\
0&1&0&0&0&0\\
0&0&1&0&0&*\\
0&0&0&1&0&0\\
0&0&0&0&1&0\\
0&0&0&0&0&1
\end{bmatrix}.
\]
As the other case, assume evaluation of $T$ completes first, which means the evaluation of $T$ does not depend on the evaluation of $S$. 
Then the linear system is of the form
\[
\setlength\arraycolsep{5pt}
0=
\begin{bmatrix}
*&1&0&0&0&0&0&0&0\\
*&*&*&1&0&0&0&0&0\\
*&*&*&*&*&1&*&*&\bm{0}\\
0&-1&1&0&0&0&\alpha&0&0\\
0&0&0&-1&1&0&0&\beta&0\\
*&*&*&*&*&*&*&*&1
\end{bmatrix}
\begin{bmatrix}
z_0\\
z_1\\
x_1\\
z_2\\
x_2\\
Tz_0\\
\tilde{A}x_1\\
\tilde{B}x_2\\
Sz_0
\end{bmatrix}.
\]
Perform Gaussian elimination to get
\begin{equation}
\setlength\arraycolsep{5pt}
0=
\begin{bmatrix}
*&1&0&0&0&0&0&0&0\\
*&*&*&1&0&0&0&0&0\\
*&*&*&*&*&1&*&*&0\\
0&-1&1&0&0&0&\alpha&0&0\\
0&0&0&-1&1&0&0&\beta&0\\
*&*&*&*&*&\bm{0}&*&*&1
\end{bmatrix}
\begin{bmatrix}
z_0\\
z_1\\
x_1\\
z_2\\
x_2\\
Tz_0\\
\tilde{A}x_1\\
\tilde{B}x_2\\
Sz_0
\end{bmatrix}.
\label{eq:eq3}
\end{equation}
This corresponds to left-multiplication by the invertible matrix
\[
\setlength\arraycolsep{5pt}
\begin{bmatrix}
1&0&0&0&0&0\\
0&1&0&0&0&0\\
0&0&1&0&0&0\\
0&0&0&1&0&0\\
0&0&0&0&1&0\\
0&0&*&0&0&1
\end{bmatrix}.
\]
Regardless of which of the two cases we start from, we arrive at the same linear system \eqref{eq:eq3}.
Continue the Gaussian elimination to get
\[
\setlength\arraycolsep{5pt}
0=
\begin{bmatrix}
*&1&0&0&0&0&0&0&0\\
*&\bm{0}&*&1&0&0&0&0&0\\
*&\bm{0}&*&\bm{0}&*&1&\bm{0}&\bm{0}&0\\
0&-1&1&0&0&0&\alpha&0&0\\
0&0&0&-1&1&0&0&\beta&0\\
*&\bm{0}&*&\bm{0}&*&0&\bm{0}&\bm{0}&1
\end{bmatrix}
\begin{bmatrix}
z_0\\
z_1\\
x_1\\
z_2\\
x_2\\
Tz_0\\
\tilde{A}x_1\\
\tilde{B}x_2\\
Sz_0
\end{bmatrix}.
\]
This corresponds to left-multiplying \eqref{eq:eq3} by the invertible matrix
\begingroup\makeatletter\def\f@size{7}\check@mathfonts
\[
\setlength\arraycolsep{3pt}
\begin{bmatrix}
1&0&0&0&0&0\\
*&1&0&0&0&0\\
*&0&1&0&0&0\\
0&0&0&1&0&0\\
0&0&0&0&1&0\\
*&0&0&0&0&1
\end{bmatrix}
\begin{bmatrix}
1&0&0&0&0&0\\
0&1&0&0&0&0\\
0&*&1&0&0&0\\
0&0&0&1&0&0\\
0&0&0&0&1&0\\
0&*&0&0&0&1
\end{bmatrix}
\begin{bmatrix}
1&0&0&0&0&0\\
0&1&0&0&0&0\\
0&0&1&0&0&0\\
0&0&0&1&0&0\\
0&0&0&0&1&0\\
0&0&0&*&*&1
\end{bmatrix}
\begin{bmatrix}
1&0&0&0&0&0\\
0&1&0&0&0&0\\
0&0&1&*&*&0\\
0&0&0&1&0&0\\
0&0&0&0&1&0\\
0&0&0&0&0&1
\end{bmatrix}.
\]\endgroup

We now explicitly parameterize the unspecified parameters one at a time.
\[
\setlength\arraycolsep{5pt}
0=
\begin{bmatrix}
\bm{-a}&1&0&0&0&0&0&0&0\\
*&0&*&1&0&0&0&0&0\\
*&0&*&0&*&1&0&0&0\\
0&-1&1&0&0&0&\alpha&0&0\\
0&0&0&-1&1&0&0&\beta&0\\
*&0&*&0&*&0&0&0&1
\end{bmatrix}
\begin{bmatrix}
z_0\\
z_1\\
x_1\\
z_2\\
x_2\\
Tz_0\\
\tilde{A}x_1\\
\tilde{B}x_2\\
Sz_0
\end{bmatrix}.
\]
The role of $a$ is to define $z_1=az_0$. So the evaluation of $(T,S)$ starts with $J_{\alpha A}(az_0)$.
If $a=0$, then $J_{\alpha A}$ ignores the input $z_0$ and always uses $0$ as the input.
Since $(T,S)$ accesses $A$ only through the evaluation of $J_{\alpha A}$,
how is it possible that $(T,S)$ evaluates $J_{\alpha A}$ only at $0$ and still encodes the zeros of $A+B$?
%Roughly speaking, $a=0$ implies that the input to $J_{\alpha A}$ is independent of $z_0$, and we draw a contradiction from that.

We now show $a\ne 0$.
Assume  $a=0$ for contradiction.
Let
\[
A(x)=c_1x, \qquad B(x)=c_2,
\]
where $c_1>0$ is unspecified and $0\ne c_2\in \reals^d$.
Then $J_{\alpha A}0=0$,
and
the mappings $T$ and $S$ are independent of the value of $c_1$.
So the set of fixed points of $T$ 
and the set of $Sz^\star$, where $z^\star$ is a fixed point of $T$,
do not depend on $c_1$.
However, the solution $\{-c_1^{-1}c_2\}=\zer(A+B)$ does depend on $c_1$.
Since $(T,S)$ is assumed to be a fixed-point encoding, $T$ must have a fixed-point $z^\star$ and it must satisfy $-c_1^{-1}c_2=Sz^\star$,
which is a contradiction.

Knowing $a\ne 0$, we can  absorb the top-left $a$ into $z_0$ and left-multiply by an invertible matrix to
get the equivalent system
\[
\setlength\arraycolsep{5pt}
0=
\begin{bmatrix}
1&0&0&0&0&0\\
0&1&0&0&0&0\\
0&0&\bm{a}&0&0&0\\
0&0&0&1&0&0\\
0&0&0 &0&1 &0\\
0&0&0&0&0&1
\end{bmatrix}
\begin{bmatrix}
\bm{-1}&1&0&0&0&0&0&0&0\\
*&0&*&1&0&0&0&0&0\\
*&0&* &0&* &1&0&0&0\\
0&-1&1&0&0&0&\alpha&0&0\\
0&0&0&-1&1&0&0&\beta&0\\
*&0&*&0&*&0&0&0&1
\end{bmatrix}
\begin{bmatrix}
\bm{a} z_0\\
z_1\\
x_1\\
z_2\\
x_2\\
Tz_0\\
\tilde{A}x_1\\
\tilde{B}x_2\\
Sz_0
\end{bmatrix}.
\]
This further simplifies to the equivalent system
\[
\setlength\arraycolsep{5pt}
0=
\begin{bmatrix}
-1&1&0&0&0&0&0&0&0\\
*&0&*&1&0&0&0&0&0\\
*&0&* &0&* &1&0&0&0\\
0&-1&1&0&0&0&\alpha&0&0\\
0&0&0&-1&1&0&0&\beta&0\\
*&0&*&0&*&0&0&0&1
\end{bmatrix}
\begin{bmatrix}
\bm{a} z_0\\
z_1\\
x_1\\
z_2\\
x_2\\
\bm{aT(a^{-1}az_0)}\\
\tilde{A}x_1\\
\tilde{B}x_2\\
\bm{S(a^{-1}az_0)}
\end{bmatrix}.
\]
By redefining $(T(z_0),S(z_0))$ to be the equivalent scaled splitting $(aT(a^{-1} a z_0),S(a^{-1}a z_0))$, we get
the equivalent system
\begin{equation}
\setlength\arraycolsep{5pt}
0=
\begin{bmatrix}
-1&1&0&0&0&0&0&0&0\\
\bm{\theta_1}&0&\bm{\theta_2}&1&0&0&0&0&0\\
\bm{\theta_3}&0&\bm{\theta_4 }&0&\bm{\theta_5 }&1&0&0&0\\
0&-1&1&0&0&0&\alpha&0&0\\
0&0&0&-1&1&0&0&\beta&0\\
\bm{\theta_6}&0&\bm{\theta_7}&0&\bm{\theta_8}&0&0&0&1
\end{bmatrix}
\begin{bmatrix}
\bm{z_0}\\
z_1\\
x_1\\
z_2\\
x_2\\
\bm{Tz_0}\\
\tilde{A}x_1\\
\tilde{B}x_2\\
\bm{Sz_0}
\end{bmatrix},
\label{eq:simplify}
\end{equation}
where we have now explicitly parameterized the remaining parameters as $\theta_1,\dots,\theta_8$.

The system \eqref{eq:simplify} defines $(T,S)$, i.e.,
it specifies the evaluation of $(T,S)$ at any input $z_0$.
Of course, $x_1$, $x_2$, and $Sz_0$ need not be solutions to the monotone inclusion problem, since the input $z_0$ is arbitrary.
To summarize our progress,
we have shown that
any frugal resolvent-splitting without lifting
is equivalent to a frugal resolvent-splitting 
of the form \eqref{eq:simplify}.

We now take a moment to consider what happens with DRS under this setup.
Although this discussion is not part of the proof, it will provide us with a sense of direction.
Under this formulation, DRS has the form
\begin{equation*}
\setlength\arraycolsep{5pt}
0=
\begin{bmatrix}
-1&1&0&0&0&0&0&0&0\\
1&0&-2&1&0&0&0&0&0\\
-1&0&\theta&0&-\theta&1&0&0&0\\
0&-1&1&0&0&0&\alpha&0&0\\
0&0&0&-1&1&0&0&\alpha&0\\
0&0&-1+\eta&0&-\eta&0&0&0&1
\end{bmatrix}
\begin{bmatrix}
z_0\\
z_1\\
x_1\\
z_2\\
x_2\\
Tz_0\\
\tilde{A}x_1\\
\tilde{B}x_2\\
Sz_0
\end{bmatrix}.
\end{equation*}
Row 1 defines $z_1=z_0$ as the input to $J_{\alpha A}$.
Row 4  corresponds to  $x_1=J_{\alpha A}z_1$.
Row 2 defines $z_2=2x_1-z_0$ as the input to $J_{\alpha B}$.
Row 5 corresponds to  $x_2=J_{\alpha B}z_2$.
Row 3 defines $Tz_0=z_0+\theta (x_2- x_1)$.
Row 6 defines $Sz_0=\eta x_2+(1-\eta)x_1$.
This linear system represents the evaluation of $(T,S)$ at any arbitrary input $z_0$,
i.e., the system defines $(T,S)$. 

To show that DRS is a fixed-point encoding, one considers evaluations of $(T,S)$ at fixed points
and shows
$x_1=x_2=Sz_0$ and $\tilde{A}x_1+\tilde{B}x_2=0$.
%since $\tilde{A}x_1\in Ax_1$ and $\tilde{B}x_2\in Bx_2$.
To do this, we add a row representing the fixed-point condition  $z_0=Tz_0$
\begin{equation*}
\setlength\arraycolsep{5pt}
0=
\begin{bmatrix}
-1&1&0&0&0&0&0&0&0\\
1&0&-2&1&0&0&0&0&0\\
-1&0&\theta&0&-\theta&1&0&0&0\\
%\bm{-1}&\bm{0}&\bm{\theta }&\bm{0}&\bm{-\theta }&\bm{1}&\bm{0}&\bm{0}&\bm{0}\\
\bm{-1}&\bm{0}&\bm{0 }&\bm{0}&\bm{0 }&\bm{1}&\bm{0}&\bm{0}&\bm{0}\\
0&-1&1&0&0&0&\alpha&0&0\\
0&0&0&-1&1&0&0&\alpha&0\\
0&0&-1+\eta&0&-\eta&0&0&0&1
\end{bmatrix}
\begin{bmatrix}
z_0\\
z_1\\
x_1\\
z_2\\
x_2\\
Tz_0\\
\tilde{A}x_1\\
\tilde{B}x_2\\
Sz_0
\end{bmatrix}.
\end{equation*}
Now the system represents evaluations of $(T,S)$ at fixed points.
We then left-multiply the system with
\[
(1/\theta)
\begin{bmatrix}
0&0&1 &-1&0&0&0
\end{bmatrix}
\]
to get $x_1=x_2$, left-multiply  the system with
\[
\begin{bmatrix}
0&0&-\eta/\theta &\eta/\theta&0&0&1
\end{bmatrix}
\]
to get $Sz_0=x_1$, and left-multiply  the system with
\[
(1/\alpha)\begin{bmatrix}
1&1&1/\theta &-1/\theta&1&1&0
\end{bmatrix}
\]
to get $0=\tilde{A}x_1+\tilde{B}x_2$.

With DRS, it is possible to perform Gaussian elimination with the linear equalities defining $(T,S)$
and the fixed-point condition $Tz_0=z_0$ to conclude $x_1=x_2=Sz_0$ and $\tilde{A}x_1+\tilde{B}x_2=0$.
With other fixed-point encodings, should we not be able to do the same?
How else could $Tz_0=z_0$ certify $Sz_0\in \zer(A+B)$?
This turns out to be true: we must be able to establish $x_1=x_2$, $x_1=Sz_0$, and $\tilde{A}x_1+\tilde{B}x_2=0$ through a linear combination of the linear equalities as otherwise we can construct counter examples that contradict the assumption that $(T,S)$ is a fixed-point encoding.

% However, not all choices for the parameters
% $\theta_1,\dots,\theta_8$
% make $(T,S)$ defined with \eqref{eq:simplify} a fixed-point encoding.
% %If a $(T,S)$ defined with \eqref{eq:simplify} is a fixed-point encoding, then it is a frugal resolvent-splitting without lifting.
% We now analyze the requirements on 
% $\theta_1,\dots,\theta_8$ that ensure $(T,S)$ is a fixed-point encoding,
% and thereby  a frugal resolvent-splitting without lifting.

We now return to the proof.
Consider \eqref{eq:simplify} with the fixed-point condition $T(z_0)=z_0$ added
\begin{equation}
\setlength\arraycolsep{5pt}
0=\underbrace{
\begin{bmatrix}
-1&1&0&0&0&0&0&0&0\\
\theta_1&0&\theta_2&1&0&0&0&0&0\\
\theta_3&0&\theta_4 &0&\theta_5 &1&0&0&0\\
\bm{-1}&\bm{0}&\bm{0} &\bm{0}&\bm{0} &\bm{1}&\bm{0}&\bm{0}&\bm{0}\\
0&-1&1&0&0&0&\alpha&0&0\\
0&0&0&-1&1&0&0&\beta&0\\
\theta_6&0&\theta_7&0&\theta_8&0&0&0&1
\end{bmatrix}}
_{=M}
\underbrace{
\begin{bmatrix}
z_0\\
z_1\\
x_1\\
z_2\\
x_2\\
Tz_0\\
\tilde{A}x_1\\
\tilde{B}x_2\\
Sz_0
\end{bmatrix}}
_{=v}.
\label{eq:Mv_zero}
\end{equation}
%System \eqref{eq:simplify} represents evaluations of $(T,S)$ at any point.
System \eqref{eq:Mv_zero} represents evaluations of $(T,S)$ at a fixed points.
% in the sense that 
% when $Mv=0$,
% we have $Tz_0=z_0$ for $(T,S)$ defined with \eqref{eq:simplify}.

We claim that the linear equalities \eqref{eq:Mv_zero} must imply 
$x_1=x_2$, $Sz_0=x_1$, and $\tilde{A}x_1+\tilde{B}x_2=0$.
We prove these three implications one-by-one by assuming otherwise and constructing counter examples.

%Remember that $(T,S)$ is assumed to be a fixed-point encoding for any maximal monotone operators $A$ and $B$ and any problem size $d$.

Assume for contradiction that \eqref{eq:Mv_zero} does not imply the  linear equality $x_1=x_2$.
By Lemma~\ref{lm-ge}, this means there is a specific instance
\[
v'=(z_0',
z_1',
x_1',
z_2',
x_2',
T(z_0'),
\tilde{A}x_1',
\tilde{B}x_2',
S(z_0'))
\in \reals^{9d}
\]
such that $Mv'=0$ but $x_1'\ne x_2'$.
%To clarify, $\tilde{A}x_1'$ and $\tilde{B}x_2'$ are vectors in $\reals^d$.
% This implies $T(A,B,z_0')=z_0'$.
The vector $v'$ represents an evaluation of\\
$(T(A,B,\cdot),S(A,B,\cdot))$ 
for any $A,B\in \mathcal{M}(\reals^d)$ satisfying
\[
\tilde{A}x_1' \in Ax_1',\quad
\tilde{B}x_2' \in B x_2'.
\]
The evaluation is at a fixed point, i.e., $T(A,B,z_0')=z_0'$, since we enforced $T(z_0)=z_0$ in \eqref{eq:Mv_zero}.
Define 
\[
A(x)=x-x_1'+\tilde{A}x_1',
\qquad
B(x)=x-x_2'+\tilde{B}x_2'.
\]
$A$ and $B$ are monotone operators constructed to match the evaluations
$A(x_1')=\tilde{A}x_1'$
and
$B(x_2')=\tilde{B}x_2'$.
Write $x^\star=S(A,B,z_0')$.
Since $(T,S)$ is a fixed-point encoding, we have
\[
0=(A+B)x^\star.
%x^\star=\frac{1}{2}(x_1'+\tilde{A}x_1'+x_2'+\tilde{B}x_2').
\]
However, $x_1'\ne x_2'$, so either $x_1'\ne x^\star$ or $x_2'\ne x^\star$ or both.
Without loss of generality assume  $x_1'\ne x^\star$.
Loosely speaking, $x_1\ne x^\star$ means $(T,S)$ was able to identify that $x^\star$ is a solution without examining the output of $A$ at $x^\star$, the purported solution.
Since the evaluation of $(T(A,B,z_0'),S(A,B,z_0'))$ depends on $A$ only through $Ax_1'$, 
what prevents us from changing the operator value at $x^\star$?
Define
\[
C(x)=2(x-x_1')+\tilde{A}x_1'.
\]
Since $C(x_1')=\tilde{A}x_1'$, we still have $T(C,B,z_0')=z_0'$ and $S(C,B,z_0')=x^\star$, i.e.,
changing $A$ to $C$ does not affect the evaluation of $T$ and $S$ at $z_0'$.
However, 
\[
0\notin (C+B)x^\star,
\]
since
$C(x^\star)\ne A(x^\star)$.
%because $\nabla f_3(x^\star)\ne \nabla f_1(x^\star)$.
In other words, $T(C,B,z_0')=z_0'$, but $S(C,B,z_0')$ is not a zero of $C+B$.
So $(T,S)$ fails to be a fixed-point encoding for $C,B\in \mathcal{M}(\reals^d)$,  and we have a contradiction.
This proves that the linear system of equalities \eqref{eq:Mv_zero} does imply the linear equality $x_1=x_2$.
%
%
%Remember that $(T,S)$ is assumed to be a fixed-point encoding for any maximal monotone operators $A$ and $B$
%and any problem size $d$.
%Assume for contradiction that \eqref{eq:Mv_zero} does not imply linear equality $x_1=x_2$.
%Assume $d=1$.
%This means there is a specific instance
%\[
%v'=(z_0',
%z_1',
%x_1',
%z_2',
%x_2',
%T(z_0'),
%\tilde{A}x_1',
%\tilde{B}x_2',
%S(z_0'))
%\in \reals^9
%\]
%such that $Mv'=0$ but $x_1'\ne x_2'$.
%Define 
%\[
%f_1(x)=(\tilde{A}x_1')x+(x-x_1')^2
%\qquad
%f_2(x)=(\tilde{B}x_2')x+(x-x_2')^2,
%\]
%so that $\nabla f_1(x_1')=\tilde{A}x_1'$
%and
%$\nabla f_2(x_2')=\tilde{B}x_2'$.
%Then
%$T(\nabla f_1,\nabla f_2,z_0')=z_0'$. 
%Write $x^\star=S(\nabla f_1,\nabla f_2,z_0')$.
%Since $(T,S)$ is a fixed-point encoding,
%we have
%\[
%x^\star=
%\argmin_{x\in \reals}\{f_1(x)+f_2(x)\}.
%\]
%However, $x_1'\ne x_2'$, so either $x_1'\ne x^\star$ or $x_2'\ne x^\star$ or both.
%Without loss of generality assume  $x_1'\ne x^\star$.
%Define
%\[
%f_3(x)=(\tilde{A}x_1')x+2(x-x_1')^2.
%\]
%Since, $\nabla f_3(x_1')=\tilde{A}x_1'$ we still have
%$T(\nabla f_3,\nabla f_2,z_0')=z_0'$
%and $S(\nabla f_3,\nabla f_2,z_0')=x^\star$.
%However, 
%\[
%x^\star\ne
%\argmin_{x\in \reals}\{f_3(x)+f_2(x)\},
%\]
%because $\nabla f_3(x^\star)\ne \nabla f_1(x^\star)$.
%Therefore $(T,S)$ fails to be a fixed-point encoding for the maximal monotone operators
%$\nabla f_3,\nabla f_2$, and we have a contradiction.

Next, assume for contradiction that 
\eqref{eq:Mv_zero} does not imply the linear equality $Sz_0=x_1$.
By Lemma~\ref{lm-ge}, this means there is a specific instance
\[
v'=(z_0',
z_1',
x_1',
z_2',
x_2',
T(z_0'),
\tilde{A}x_1',
\tilde{B}x_2',
S(z_0'))
\in \reals^{9d}
\]
such that $Mv'=0$ but $S(z_0')\ne x_1'=x_2' $.
(We now know that $x_1'=x_2' $.)
Again, loosely speaking, $S(z_0')\ne x_1'$ means $(T,S)$ was able to identify that $S(z_0')$ is a solution without examining the output of $A$ at $S(z_0')$, the purported solution, so we draw a contradiction by changing the operator value at $S(z_0')$.
Using the same definition of $A$, $B$, and $C$, 
the same arguments carry over and we can establish
$T(A,B,z_0')=T(C,B,z_0')=z_0'$ and 
$S(A,B,z_0')=S(C,B,z_0')$.
Define $x^\star=S(A,B,z_0')$. 
Since we assumed (for contradiction) that $x^\star\ne x_1'$, we have
\[
(A+B)(x^\star)\ne (C+B)(x^\star).
\]
Remember that $A$, $B$, and $C$ are single-valued.
So it is not possible for both $0=(A+B)(x^\star)$ and $0=(C+B)(x^\star)$ to be true.
Therefore $(T,S)$ fails to be a fixed-point encoding for the instance $A,B\in \mathcal{M}(\reals^d)$ or 
$C,B\in \mathcal{M}(\reals^d)$, and we have a contradiction.
This proves that the linear system of equalities \eqref{eq:Mv_zero} does imply the linear equality $Sz_0=x_1$.

Finally, assume for contradiction that 
\eqref{eq:Mv_zero} does not imply the linear equality $\tilde{A}x_1+\tilde{B}x_2=0$.
By Lemma~\ref{lm-ge}, this means there is a specific instance
\[
v'=(z_0',
z_1',
x_1',
z_2',
x_2',
T(z_0'),
\tilde{A}x_1',
\tilde{B}x_2',
S(z_0'))
\in \reals^{9d}
\]
such that $Mv'=0$ but $\tilde{A}x_1'+\tilde{B}x_2'\ne 0$.
Loosely speaking, $\tilde{A}x_1'+\tilde{B}x_2'\ne 0$ means $(T,S)$ was able to identify that $S(z_0')$ is a solution
without obtaining outputs of $A$ and $B$ that sum to $0$, and we draw a contradiction by demonstrating that
this is not possible when $A$ and $B$ are single-valued.
We now know that $x_1'=x_2'=S(z_0')$. 
Define 
\[
A(x)=x-x_1'+\tilde{A}x_1',
\qquad
B(x)=x-x_2'+\tilde{B}x_2'.
\]
%($A$ and $B$ are monotone operators constructed to match the evaluations $A(x_1')=\tilde{A}x_1'$ and $B(x_2')=\tilde{B}x_2'$.)
Then $T(A,B,z_0)=z_0$, but
\[
(A+B)(S(z_0'))=
\tilde{A}x_1'+
\tilde{B}x_2'\ne 0,
\]
i.e., the purported solution $S(z_0')$ is not a solution.
So $(T,S)$ fails to be a fixed-point encoding for the instance $A,B\in\mathcal{M}(\reals^d)$,
and we have a contradiction.
This proves that the linear system of equalities \eqref{eq:Mv_zero} does imply the linear equality $\tilde{A}x_1+\tilde{B}x_2=0$.

With the assertions proved, we proceed to complete the proof.
Gaussian elimination on \eqref{eq:Mv_zero} gives us
the equivalent system
\begin{equation}
\setlength\arraycolsep{5pt}
0=
\begin{bmatrix}
-1&1&0&0&0&0&0&0&0\\
\theta_1&0&\theta_2&1&0&0&0&0&0\\
\bm{\theta_3+1}&0&\theta_4 &0&\theta_5 &\bm{0}&0&0&0\\
-1&0&0 &0&0 &1&0&0&0\\
0&-1&1&0&0&0&\alpha&0&0\\
0&0&0&-1&1&0&0&\beta&0\\
\theta_6&0&\theta_7&0&\theta_8&0&0&0&1
\end{bmatrix}
\begin{bmatrix}
z_0\\
z_1\\
x_1\\
z_2\\
x_2\\
Tz_0\\
\tilde{A}x_1\\
\tilde{B}x_2\\
Sz_0
\end{bmatrix}.
\label{eq:system-final}
\end{equation}
Because the system of linear equalities  must imply $x_1=x_2$
and because of where the zeros and nonzeros are placed,
we have
$\theta_3=-1$ and $\theta_4=-\theta_5=\theta$ for some $\theta\ne 0$.
Let us further spell out this argument.
The linear equality $x_1=x_2$ can be expressed as
\begin{equation}
\setlength\arraycolsep{5pt}
0=
\begin{bmatrix}
0&0&-1&0&1&0&0&0&0
\end{bmatrix}
\begin{bmatrix}
z_0\\
z_1\\
x_1\\
z_2\\
x_2\\
Tz_0\\
\tilde{A}x_1\\
\tilde{B}x_2\\
Sz_0
\end{bmatrix}.
\label{eq:system-result}
\end{equation}
By Lemma~\ref{lm-ge}, 
the system of linear equalities \eqref{eq:system-final} implies \eqref{eq:system-result}
if and only if we can linearly combine the rows of \eqref{eq:system-final}
to get \eqref{eq:system-result}.
Row $7$ of \eqref{eq:system-final} cannot be used in the linear combination,
as any nonzero contribution from row $7$ will place a nonzero component in the
$9$th column.
Row $6$ of \eqref{eq:system-final} also cannot be used in  the linear combination,
as any nonzero contribution from row $6$ will place a nonzero component in the
$8$th column.
Repeating this argument tells us that
rows $7$, $6$, $5$, $4$, $2$, and $1$ cannot be used in the linear combination.
Therefore, a scalar multiple of row $3$ of \eqref{eq:system-final} must equal \eqref{eq:system-result}, and this tells us
$\theta_3=-1$ and $\theta_4=-\theta_5=\theta$ for some $\theta\ne 0$.
%We use a similar argument twice more in this proof, and once in the proof of Theorem~\ref{thm:3op-impossibility}.

Plugging in the values of $\theta_3$, $\theta_4$ and $\theta_5$, we get
\begin{equation}
\setlength\arraycolsep{5pt}
0=
\begin{bmatrix}
-1&1&0&0&0&0&0&0&0\\
\theta_1&0&\theta_2&1&0&0&0&0&0\\
0&0&\theta &0&-\theta &0&0&0&0\\
-1&0&0 &0&0 &1&0&0&0\\
0&-1&1&0&0&0&\alpha&0&0\\
0&0&0&-1&1&0&0&\beta&0\\
\theta_6&0&\theta_7&0&\theta_8&0&0&0&1
\end{bmatrix}
\begin{bmatrix}
z_0\\
z_1\\
x_1\\
z_2\\
x_2\\
Tz_0\\
\tilde{A}x_1\\
\tilde{B}x_2\\
Sz_0
\end{bmatrix}.
\label{eq:system-final2}
\end{equation}
Because the linear equalities must imply $x_1=Sz_0$
and because of where the zeros and nonzeros are placed,
$\theta_6=0$, $\theta_7=-1+\eta$, and $\theta_8=-\eta$ for some $\eta\in \reals$.
Let us further spell out this argument.
The linear equality $Sz_0=x_1$ can be expressed as
\begin{equation}
\setlength\arraycolsep{5pt}
0=
\begin{bmatrix}
0&0&-1&0&0&0&0&0&1
\end{bmatrix}
\begin{bmatrix}
z_0\\
z_1\\
x_1\\
z_2\\
x_2\\
Tz_0\\
\tilde{A}x_1\\
\tilde{B}x_2\\
Sz_0
\end{bmatrix}
\label{eq:theorem1-eq1}
\end{equation}
By Lemma~\ref{lm-ge}, the system of linear equalities \eqref{eq:system-final2} implies \eqref{eq:theorem1-eq1}
if and only if we can linearly combine the rows of \eqref{eq:system-final2}
to get \eqref{eq:theorem1-eq1}.
Row $6$ cannot be used in  the linear combination,
as any nonzero contribution will place a nonzero component in the $8$th column.
Row $5$ cannot be used in  the linear combination,
as any nonzero contribution will place a nonzero component in the $7$th column.
Repeating this argument tells us that rows $6$, $5$, $4$, $2$, and $1$ cannot be used in the linear combination.
This leaves us with the rows
\[
\setlength\arraycolsep{5pt}
0=
\begin{bmatrix}
0&0&\theta &0&-\theta &0&0&0&0\\
\theta_6&0&\theta_7&0&\theta_8&0&0&0&1
\end{bmatrix}
%\begin{bmatrix}
%z_0\\
%z_1\\
%x_1\\
%z_2\\
%x_2\\
%Tz_0\\
%\tilde{A}x_1\\
%\tilde{B}x_2\\
%Sz_0
%\end{bmatrix}.
\begin{bmatrix}
z_0\\
z_1\\
x_1\\
z_2\\
x_2\\
Tz_0\\
\tilde{A}x_1\\
\tilde{B}x_2\\
Sz_0
\end{bmatrix}
\]
to imply \eqref{eq:theorem1-eq1}.
This is possible only if $\theta_6=0$, $\theta_7=-1+\eta$, and $\theta_8=-\eta$ for some $\eta\in \reals$.

Plugging in the values of $\theta_6$, $\theta_7$ and $\theta_8$, we get
\begin{equation}
\setlength\arraycolsep{5pt}
0=
\begin{bmatrix}
-1&1&0&0&0&0&0&0&0\\
\theta_1&0&\theta_2&1&0&0&0&0&0\\
0&0&\theta &0&-\theta &0&0&0&0\\
-1&0&0 &0&0 &1&0&0&0\\
0&-1&1&0&0&0&\alpha&0&0\\
0&0&0&-1&1&0&0&\beta&0\\
0&0&-1+\eta&0&-\eta &0&0&0&1
\end{bmatrix}
\begin{bmatrix}
z_0\\
z_1\\
x_1\\
z_2\\
x_2\\
Tz_0\\
\tilde{A}x_1\\
\tilde{B}x_2\\
Sz_0
\end{bmatrix}.
\label{eq:system-final3}
\end{equation}
Because the system of linear equalities must imply $0=\tilde{A}x_1+\tilde{B}x_2$
and because of where the zeros and nonzeros are placed,
we have
$\theta_1=\beta/\alpha$ and $\theta_2=-1-\beta/\alpha$.
Let us further spell out this argument.
The linear equality $0=\tilde{A}x_1+\tilde{B}x_2$ can be expressed as
\begin{equation}
\setlength\arraycolsep{5pt}
0=
\begin{bmatrix}
0&0&0&0&0&0&1&1&0
\end{bmatrix}
\begin{bmatrix}
z_0\\
z_1\\
x_1\\
z_2\\
x_2\\
Tz_0\\
\tilde{A}x_1\\
\tilde{B}x_2\\
Sz_0
\end{bmatrix}.
\label{eq:theorem1-eq2}
\end{equation}
Left-multiply \eqref{eq:system-final3} by the invertible matrix
\[
\setlength\arraycolsep{5pt}
\begin{bmatrix}
1&0&0&0&0&0&0\\
0&1&0&0&0&0&0\\
-(1+\theta_2)&1&1/\theta&0&-(1+\theta_2)&1&0\\
0&0&0&1&0&0&0\\
0&0&0&0&1&0&0\\
0&0&0&0&0&1&0\\
0&0&0&0&0&0&1
\end{bmatrix}
\]
to get
\begin{equation*}
\setlength\arraycolsep{4pt}
0=
\begin{bmatrix}
-1&1&0&0&0&0&0&0&0\\
\theta_1&0&\theta_2&1&0&0&0&0&0\\
\bm{1 + \theta_1 + \theta_2  }&\bm{0}&\bm{0} &\bm{0}& \bm{0}&\bm{0}&\bm{-(1 + \theta_2) \alpha  }&\bm{\beta }&\bm{0}\\
-1&0&0 &0&0 &1&0&0&0\\
0&-1&1&0&0&0&\alpha&0&0\\
0&0&0&-1&1&0&0&\beta&0\\
0&0&-1+\eta&0&-\eta&0&0&0&1
\end{bmatrix}
\begin{bmatrix}
z_0\\
z_1\\
x_1\\
z_2\\
x_2\\
Tz_0\\
\tilde{A}x_1\\
\tilde{B}x_2\\
Sz_0
\end{bmatrix}.
\end{equation*}
Left-multiply by the invertible matrix
\[
\setlength\arraycolsep{5pt}
\begin{bmatrix}
0&0&1&0&0&0&0\\
1&0&0&0&0&0&0\\
0&0&0&0&1&0&0\\
0&1&0&0&0&0&0\\
0&0&0&0&0&1&0\\
0&0&0&1&0&0&0\\
0&0&0&0&0&0&1
\end{bmatrix}
\]
to permute the rows and get
\begin{equation}
\setlength\arraycolsep{3.5pt}
0=
\begin{bmatrix}
1 + \theta_1 + \theta_2  &0&0 &0& 0&0&-(1 + \theta_2) \alpha  &\beta&0\\
-1&1&0&0&0&0&0&0&0\\
0&-1&1&0&0&0&\alpha&0&0\\
\theta_1&0&\theta_2&1&0&0&0&0&0\\
0&0&0&-1&1&0&0&\beta&0\\
-1&0&0 &0&0 &1&0&0&0\\
0&0&-1+\eta&0&-\eta&0&0&0&1
\end{bmatrix}
\begin{bmatrix}
z_0\\
z_1\\
x_1\\
z_2\\
x_2\\
Tz_0\\
\tilde{A}x_1\\
\tilde{B}x_2\\
Sz_0
\end{bmatrix}.
\label{eq:system-final4}
\end{equation}
By the lemma and the equivalence of \eqref{eq:system-final3} and \eqref{eq:system-final4},
the system of linear equalities \eqref{eq:system-final3} implies \eqref{eq:theorem1-eq2}
if and only if we can linearly combine the rows of \eqref{eq:system-final4} to get \eqref{eq:theorem1-eq2}.
Row $7$ cannot be used in the linear combination,
as any nonzero contribution will place a nonzero component in the $9$th column.
Row $6$ cannot be used in the linear combination,
as any nonzero contribution will place a nonzero component in the $6$th column.
Repeating this argument tells us that rows $7$, $6$, $5$, $4$, $3$, and $2$ cannot be used in the linear combination.
This leaves us with 
\[
\setlength\arraycolsep{4pt}
0=
\begin{bmatrix}
1 + \theta_1 + \theta_2  &0&0 &0& 0&0&-(1 + \theta_2) \alpha  &\beta&0\\
\end{bmatrix}
\begin{bmatrix}
z_0\\
z_1\\
x_1\\
z_2\\
x_2\\
Tz_0\\
\tilde{A}x_1\\
\tilde{B}x_2\\
Sz_0
\end{bmatrix}
\]
to imply \eqref{eq:theorem1-eq2} and this requires $\theta_1=\beta/\alpha$ and $\theta_2=-1-\beta/\alpha$.

% Plugging these in, we get
% \[
% \setlength\arraycolsep{5pt}
% 0=
% \begin{bmatrix}
% -1&1&0&0&0&0&0&0&0\\
% \theta_1&0&\theta_2&1&0&0&0&0&0\\
% \bm{0}&0&\bm{\theta }&0&\bm{-\theta}&0&0&0&0\\
% -1&0&0 &0&0 &1&0&0&0\\
% 0&-1&1&0&0&0&\alpha&0&0\\
% 0&0&0&-1&1&0&0&\beta&0\\
% \bm{0}&0&\bm{-1+\eta}&0&\bm{-\eta}&0&0&0&1
% \end{bmatrix}
% \begin{bmatrix}
% z_0\\
% z_1\\
% x_1\\
% z_2\\
% x_2\\
% Tz_0\\
% \tilde{A}x_1\\
% \tilde{B}x_2\\
% Sz_0
% \end{bmatrix}.
% \]
% We perform Gaussian elimination again to get the equivalent system
% \[
% \setlength\arraycolsep{4pt}
% 0=
% \begin{bmatrix}
% -1&1&0&0&0&0&0&0&0\\
% \theta_1&0&\theta_2&1&0&0&0&0&0\\
% \bm{1 + \theta_1 + \theta_2  }&\bm{0}&\bm{0} &\bm{0}& \bm{0}&\bm{0}&\bm{-(1 + \theta_2) \alpha  }&\bm{\beta }&\bm{0}\\
% -1&0&0 &0&0 &1&0&0&0\\
% 0&-1&1&0&0&0&\alpha&0&0\\
% 0&0&0&-1&1&0&0&\beta&0\\
% 0&0&-1+\eta&0&-\eta&0&0&0&1
% \end{bmatrix}
% \begin{bmatrix}
% z_0\\
% z_1\\
% x_1\\
% z_2\\
% x_2\\
% Tz_0\\
% \tilde{A}x_1\\
% \tilde{B}z_2\\
% Sz_0
% \end{bmatrix}.
% \]
% Because  the linear equality must imply $\tilde{A}x_1+\tilde{B}z_2=0$
% and because of where the zeros and nonzeros are placed,
% $\theta_1=\beta/\alpha$ and $\theta_2=-1-\beta/\alpha$.

Finally, plugging in the parameters and expressing the splitting in functional form, we get
the splitting of Theorem~\ref{thm:2op-necessary}.
\qed

\subsection{Proof of Theorem~\ref{thm:2op-sufficiency}}
When $\alpha=\beta$, the splitting $(T,S)$ of Theorem~\ref{thm:2op-necessary} reduces to the setup of DRS.
The fixed-point iteration with respect to the DRS operator converges for all maximal monotone $A$ and $B$ if and only if $\theta\in(0,2)$.
That DRS converges for $\theta\in(0,2)$ is well known \cite[\S26.3]{BauschkeCombettes2017_convex},
and that DRS may diverge for some maximal monotone operators when $\theta\notin(0,2)$
 can be verified by considering the operators $A=0$ and $B=N_{\{0\}}$, where 
$N_{\{0\}}$ denotes the normal cone operator with respect to the set $\{0\}$.

Now assume $\alpha\ne \beta$. 
We provide counter examples,
single-valued maximal monotone operators $A$ and $B$
such that $\{0\}=\zer(A+B)$ 
and $T^kz^0$ diverges for any $z^0\ne 0$.
Note that the parameters $\alpha$ and $\beta$ are fixed and are provided by the splitting.
Our counter examples rely on $\alpha$ and $\beta$.

For the moment, consider the case  $d=2$.
Consider the problem
\[
\underset{x\in \mathbb{R}^2}{\mbox{find}}\quad
0=(A+B)x,
\]
where
\[
A=
\setstackgap{L}{1\baselineskip}
%\setstacktabbedgap{0.5ex}\fixTABwidth{T}
\bracketMatrixstack{
0&\tan(\omega)/\alpha\\
-\tan(\omega)/\alpha&0
}
\qquad
B=
\setstackgap{L}{1\baselineskip}
%\setstacktabbedgap{0.5ex}\fixTABwidth{T}
\bracketMatrixstack{
0&-\tan(\omega)/\beta\\
\tan(\omega)/\beta&0
}
\]
and $\alpha,\beta>0$, and $\omega\in (0,\pi/2)$.
We identify $A$ and $B$ as maximal monotone operators from $\mathbb{R}^2\rightarrow\mathbb{R}^2$.
Note that  $x^\star=0$ is the unique solution.

With basic algebra, we can show that
\[
Tz
=
\setstackgap{L}{1\baselineskip}
%\setstacktabbedgap{0.5ex}\fixTABwidth{T}
\bracketMatrixstack{
1&{\color{red}\theta}(1-\beta/\alpha)\cos(\omega)\sin(\omega)\\
-{\color{red}\theta}(1-\beta/\alpha)\cos(\omega)\sin(\omega)&
1
}z.
\]
With basic eigenvalue computation, we get
\[
|\lambda_1|^2=|\lambda_2|^2
=
1+\left(
{\color{red}\theta}(1-\beta/\alpha)\cos(\omega)\sin(\omega)
\right)^2>1,
\]
where $\lambda_1,\lambda_2$ are the eigenvalues of the matrix that defines $T$.
So if $z^0\ne 0$, the iteration $z^{k+1}=Tz^k$ diverges in that $\|z^k\|\rightarrow \infty$
and $\|Sz^k\|\rightarrow\infty$.

When $d>2$, we arrive at the same conclusion with
\[
\begin{bmatrix}
A&0&0&\cdots&0\\
0&0&\ddots&\ddots&0\\
\vdots&\vdots&\ddots&\ddots&0\\
0&0&\cdots&&0
\end{bmatrix}\in \reals^{d\times d},
\qquad
\begin{bmatrix}
B&0&0&\cdots&0\\
0&0&\ddots&\ddots&0\\
\vdots&\vdots&\ddots&\ddots&0\\
0&0&\cdots&&0
\end{bmatrix}\in \reals^{d\times d},
\]
which is the same counter example
embedded into $d$ dimensions.
\qed

\section{Impossibility of 3 operator resolvent-splitting without lifting}
\label{s:impossibility}
%Given a dimension $d$,
Define the problem class \eqref{eq:3op} to be the collection of monotone 
inclusion problems of the form
\begin{equation}
\underset{x\in \mathbb{R}^d}{\mbox{find}}\quad
0\in (A+B+C)x
\tag{3op-$\reals^d$}
\label{eq:3op}
\end{equation}
with $A,B,C\in \mathcal{M}(\reals^d)$.
A pair of functions $(T,S)$ is a fixed-point encoding for the problem class \eqref{eq:3op} if
\[
\exists z^\star\in \reals^{d'} \text{ such that}\,
\left(
\begin{array}{ll}
T(A, B, C, z^\star)&=z^\star\\
S(A, B, C, z^\star)&=x^\star
\end{array}
\right)
\quad\Leftrightarrow\quad
0\in (A+B+C)(x^\star).
\]
We call
\[
T:\mathcal{M}(\reals^d)\times \mathcal{M}(\reals^d)\times \mathcal{M}(\reals^d)\times \reals^{d'}\rightarrow \reals^{d'}
\]
the fixed-point mapping
and
\[
S:\mathcal{M}(\reals^d)\times \mathcal{M}(\reals^d)\times \mathcal{M}(\reals^d)\times \reals^{d'}\rightarrow \reals^{d},
\]
the solution mapping.
The four key terms, resolvent-splitting, frugal, unconditional convergence, and no lifting,
are defined analogously.

%A pair of functions $(T,S)$ is a fixed-point encoding for problem
%\begin{equation}
%\underset{x\in \mathbb{R}^d}{\mbox{find}}\quad
%0\in (A+B+C)x
%\tag{3op}
%\label{eq:3op}
%\end{equation}
%if
%\[
%\exists z^\star \text{ such that}\,
%\left(
%\begin{array}{ll}
%T(A, B, C, z^\star)&=z^\star\\
%S(A, B, C, z^\star)&=x^\star
%\end{array}
%\right)
%\quad\Leftrightarrow\quad
%0\in (A+B+C)(x^\star).
%\]

To define the notion of resolvent-splitting without lifting 
for the problem class \eqref{eq:3op},
we define  the class of mappings $\mathcal{G}$
similarly to how we defined $\mathcal{F}$.
Let $I$ be the ``identity mapping'' defined as
$I:\mathcal{M}(\reals^d)\times\mathcal{M}(\reals^d)\times\mathcal{M}(\reals^d)\times\reals^d\rightarrow\reals^d$ 
and
$I(A,B,C,z)=z$ for any $A,B,C\in \mathcal{M}(\reals^d)$ and $z\in \reals^d$.
Let $J_{\alpha,1}$ be the resolvent with respect to the first operator
defined as
$J_{\alpha,1}:\mathcal{M}(\reals^d)\times\mathcal{M}(\reals^d)\times\mathcal{M}(\reals^d)\times\reals^d\rightarrow\reals^d$ 
and
$J_{\alpha,1 }(A,B,C,z)=J_{\alpha A}(z) $ for any $A,B,C\in \mathcal{M}(\reals^d)$ and $z\in \reals^d$.
Define  $J_{\beta,2}$ likewise with $J_{\beta,2 }(A,B,C,z)=J_{\beta B}(z) $
and  $J_{\gamma,3}$ likewise as $J_{\gamma,3 }(A,B,C,z)=J_{\gamma C}(z) $.
Let
\[
\mathcal{G}_0=\{I\}\cup
\{J_{\alpha ,1}\,|\,\alpha>0\}\cup
\{J_{\beta ,2}\,|\,\beta>0\}\cup
\{J_{\gamma,3}\,|\,\gamma>0\}.
\]
Recursively define
\[
\mathcal{G}_{i+1}=
\{F+G\,|\,F,G\in \mathcal{G}_{i}\}
\cup
\{F\circ G\,|\,F,G\in \mathcal{G}_{i}\}
\cup
\{\gamma F\,|\,F\in \mathcal{G}_i,\,\gamma\in \reals\}
\]
for $i=0,1,2,\dots$, where ``composition'' $F\circ G$ is defined analogously.
%$\mathcal{G}_1,\mathcal{G}_2,\dots$ are defined recursively in an analogous manner, 
Finally, define
\[
\mathcal{G}=\bigcup^\infty_{i=0}\mathcal{G}_i.
\]
Elements of $\mathcal{G}$ 
map
$\mathcal{M}(\reals^d)\times\mathcal{M}(\reals^d)\times\mathcal{M}(\reals^d)\times\reals^d$
to $\reals^d$.
If $R\in \mathcal{F}$ and $A,B,C\in \mathcal{M}(\reals^d)$,
then $R(A,B,C,\cdot):\reals^d\rightarrow\reals^d$.
If $(T,S)$ is a fixed-point encoding for the problem class \eqref{eq:3op}
and $T,S\in \mathcal{G}$, then $(T,S)$ is a resolvent-splitting without lifting
for the problem class \eqref{eq:3op}.

%where $I$ is the ``identity mapping'' defined as
%$I:\mathcal{M}(\reals^d)\times\mathcal{M}(\reals^d)\times \mathcal{M}(\reals^d)\times\reals^d\rightarrow\reals^d$ 
%and
%$I(A,B,C,z)=z$ for any $A,B\in \mathcal{M}(\reals^d)$ and $z\in \reals^d$.
%Recursively define
%\[
%\mathcal{G}_{i+1}=
%\{F+G\,|\,F,G\in \mathcal{F}_{i}\}
%\cup
%\{F\circ G\,|\,F,G\in \mathcal{F}_{i}\}
%\cup
%\{\beta F\,|\,F\in \mathcal{F}_i,\,\beta\in \reals\}
%\]
%for $i=1,2,\dots$.
%The ``composition'' $F\circ G$ is defined with
%\[
%(F\circ G)(A,B,z)=F(A,B,G(A,B,z))
%\]
% for any $z\in \reals^d$ and $A,B\in \mathcal{M}(\reals^d)$.
%Note that $\mathcal{F}_0\subset \mathcal{F}_1\subset \mathcal{F}_2\subset \cdots$. Finally define
%\[
%\mathcal{G}=\bigcup^\infty_{i=0}\mathcal{G}_i.
%\]
%

% Frugality is defined analogously.
% We could make the definition formal with the notion of evaluation procedures and DAGs,
% but there is no need, since we only use the notion informally for the problem class \eqref{eq:3op}.

Frugality is defined analogously with the notion of evaluation procedures.
We only use the notion of frugality informally for the problem class \eqref{eq:3op}.

% We could make the definition formal  and DAGs,
% but there is no need, since we only use the notion informally for the problem class \eqref{eq:3op}.

Unconditional convergence is also defined analogously.
We say $(T,S)$ converges unconditionally for the problem class \eqref{eq:3op}
if
\[
T^kz^0 \rightarrow z^\star,\quad Sz^\star\in \zer(A+B+C)
\]
for any $z^0\in \reals^d$ and $A,B,C\in \mathcal{M}(\reals^d)$, when $\zer(A+B+C)\ne \emptyset$.

\subsection{Impossibility result}
If one could find a frugal, unconditionally convergent resolvent-splitting without lifting for \eqref{eq:3op},
it would be a satisfying generalization of DRS to 3 operators.
However, this is impossible. Even if we drop
frugality and convergence as requirements, this is impossible.

\begin{theorem}
\label{thm:3op-impossibility}
There is no resolvent-splitting \textbf{without lifting} for \eqref{eq:3op}.
\end{theorem}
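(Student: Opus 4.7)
The plan is to extend the Farkas-based linear-algebraic dissection from the proof of Theorem~\ref{thm:2op-necessary} to three operators and derive a contradiction from the no-lifting assumption. Suppose, for contradiction, that $(T,S)$ is a resolvent-splitting without lifting for \eqref{eq:3op}. Since $T,S\in\mathcal{G}$, any evaluation of $T(A,B,C,z_0)$ and $S(A,B,C,z_0)$ unfolds into finitely many resolvent calls $x_\ell^A=J_{\alpha_\ell A}(z_\ell^A)$, $x_\ell^B=J_{\beta_\ell B}(z_\ell^B)$, $x_\ell^C=J_{\gamma_\ell C}(z_\ell^C)$ interleaved with scalar multiplications and vector additions, in which each input $z_\ell^\bullet$ and each of $Tz_0,Sz_0$ is a linear combination of $z_0$ and of the resolvent outputs computed earlier in some serial ordering. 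Together with the defining identities $z_\ell^A=x_\ell^A+\alpha_\ell\tilde{A}x_\ell^A$ (and analogs for $B,C$) and the fixed-point condition $Tz_0=z_0$, this packages the entire evaluation as a homogeneous linear system $Mv=0$ in the spirit of \eqref{eq:Mv_zero}.

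I would then rerun, essentially verbatim, the three Farkas-style contradiction arguments from the 2-operator proof, in each case building a counterexample by modifying one of $A,B,C$ at a point the algorithm never queries. This should force, via Lemma~\ref{lm-ge}, the existence of indices $i,j,k$ such that the four identities $x_i^A=Sz_0$, $x_j^B=Sz_0$, $x_k^C=Sz_0$, and $\tilde{A}x_i^A+\tilde{B}x_j^B+\tilde{C}x_k^C=0$ each arise as linear combinations of the rows of $M$. Intuitively, each of the three operators must be queried by the algorithm at the purported solution, and the three queried values must sum to zero.

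The no-lifting assumption then collides with the causal structure of the evaluation DAG. Among the three distinguished calls producing $x_i^A,x_j^B,x_k^C$, at least one is topologically latest; by symmetry assume it is the $C$-call producing $x_k^C$. Its input $z_k^C$ is a linear combination of $z_0$ and strictly earlier resolvent outputs, so $z_k^C$ carries no direct dependence on $\tilde{C}x_k^C$. On the other hand, the forced identities combine into $z_k^C=Sz_0-\gamma_k(\tilde{A}x_i^A+\tilde{B}x_j^B)$, which prescribes a specific affine dependence of $z_k^C$ on $\tilde{A}x_i^A$ and $\tilde{B}x_j^B$. I would close the argument by picking affine maximal monotone $A$ and $B$ that independently sweep out values of $\tilde{A}(Sz_0)$ and $\tilde{B}(Sz_0)$ while fixing their values at every other queried point, and exhibiting a direction in which the causal linear representation of $z_k^C$ must be blind but the prescribed expression is not. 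Equivalently, a second application of Lemma~\ref{lm-ge} to this additional linear requirement must fail, yielding the sought contradiction.

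The main obstacle I foresee is making the last step watertight when the evaluation DAG is allowed to contain many resolvent calls of each operator, since the splitting is not assumed frugal. Care is needed to verify that the topologically latest of the three distinguished calls can truly be treated as receiving no downstream information about $\tilde{C}x_k^C$, and symmetrically for the other operators. Overcoming this likely requires a small bookkeeping step showing that, after the Farkas identities are applied, the contributions of all other resolvent calls to $Mv=0$ either cancel or become irrelevant, so that the contradiction can be localized around the three distinguished calls exactly as in the 2-operator proof.
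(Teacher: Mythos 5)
Your proposal captures the right setup — encode the evaluation of $T$ at a fixed point as a homogeneous linear system $Mv=0$ and argue via Lemma~\ref{lm-ge} — but it diverges from a workable argument at two essential points, and I believe the route you sketch would not close.

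First, the step ``rerun, essentially verbatim, the three Farkas-style contradiction arguments'' does not transfer, for the reason you flag yourself but underestimate. In the 2-operator proof each operator is queried exactly once (frugality), so the counterexample construction from a Farkas vector $v'$ is unobstructed: you only need a maximal monotone operator matching a \emph{single} input-output pair, which always exists. For \eqref{eq:3op} the splitting is not assumed frugal, so the operator $A$ may be queried at several points $x_{a(1)},\dots,x_{a(n_A)}$, and the $v'$ delivered by Lemma~\ref{lm-ge} lives only in the nullspace of $M$ — there is no reason its entries correspond to a trace of any monotone $A$. The paper resolves this by \emph{adding} to the system the equalities \eqref{e-xequal}, which force each operator's queries to collapse to a single point, precisely so that a Farkas vector always yields monotone-consistent data. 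Without that device your step has a genuine hole, not merely a ``bookkeeping'' one.

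Second, your argument points in the opposite direction from what is actually true, and the DAG/causality step would not produce a contradiction. You aim to show the identities $x_i^A=x_j^B=x_k^C=Sz_0$ and $\tilde{A}x_i^A+\tilde{B}x_j^B+\tilde{C}x_k^C=0$ \emph{are} linearly derivable from $Mv=0$, and then collide this with the evaluation order. But if those identities were derivable, $(T,S)$ would in fact be behaving like a valid fixed-point encoding at every fixed point; nothing in the causal structure forbids it. The input $z_k^C$ to the latest $C$-call can freely depend on $\tilde{A}x_i^A$ and $\tilde{B}x_j^B$: both are affine functions of earlier computed quantities $z_i^A,x_i^A,z_j^B,x_j^B$, so there is no causal blindness to exploit. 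The impossibility comes instead from a \emph{dimension count}: after Gaussian elimination, only the single row $m$ of $M$ (the one defining $Tz_0$, merged with the fixed-point row) and the rows of $N$ encoding \eqref{e-xequal} can contribute to an $x$-only identity; $N$ already accounts for $n-3$ equalities, and adding $m$ can drop the codimension of the nullspace by at most $d$, so at most $n-2$ equalities among $x_1,\dots,x_n$ can follow, whereas $x_1=\cdots=x_n$ needs $n-1$. Hence $x_{a(1)}=x_{b(1)}=x_{c(1)}$ \emph{cannot} be forced, and a $v'$ violating (say) $x_{b(1)}=x_{c(1)}$ while satisfying $Mv'=0$ and \eqref{e-xequal} is then turned into a 1-D counterexample where the free operator $C$ is tuned at the purported solution so that $0\notin(A+B+C)(Sz_0)$. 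Your proposal has neither the extra assumptions \eqref{e-xequal} that make the counterexample constructible, nor the codimension argument that is the actual engine of impossibility.
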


\emph{Clarification.}
Assume $T(A,B,C,\cdot):\mathbb{R}^d\rightarrow\mathbb{R}^d$
and $S(A,B,C,\cdot):\mathbb{R}^d\rightarrow\mathbb{R}^d$
are constructed with finitely many resolvents,
\begin{align*}
J_{\alpha(1) A},J_{\alpha(2) A}, \dots, J_{\alpha(n_A) A}\\
J_{\beta(1) B},J_{\beta(2) B}, \dots, J_{\beta(n_B) B}\\
J_{\gamma(1) C},J_{\gamma(2) C}, \dots, J_{\gamma(n_C) C}
\end{align*}
where the parameters $\alpha(i)$, $\beta(j)$, $\gamma(k)$
%for $i=1,\dots,n_A$, $i=1,\dots,n_B$, $k=1,\dots,n_C$ 
may be different.
Theorem~\ref{thm:3op-impossibility} states that
$(T,S)$ fails to be a fixed-point encoding.
% for some  $A$, $B$, and $C$.

\emph{Clarification.}
Another way to state Theorem~\ref{thm:3op-impossibility}
is to say that
no element of the near-ring
$\mathcal{G}$ is a fixed-point encoding for \eqref{eq:3op}.

\subsection{Proof of Theorem~\ref{thm:3op-impossibility}}
\subsubsection{Outline}
The proof can be divided into in roughly three steps.
In the first step, we set up the notation and express the evaluation of $T$ with a set of linear and non-linear equalities.
In the second step, we show that the linear equalities, coupled with the fixed point condition and some additional assumptions,
cannot show that the three operators are evaluated at a same single point.
In the third step, we use the conclusion of the second step and a Farkas-type lemma 
to take a certain element from the null space of the linear system and use it to construct a counter example.

\subsubsection{Proof}
Assume for contradiction that $(T,S)$ is a resolvent-splitting without lifting.
Let $n$ be the total number of resolvent evaluations
required to compute $T$ and $S$.
The specific value of $n$ depends on how you count, i.e., whether you simplify things
and whether some resolvent evaluations are counted redundantly.
All that matters is that $n$ is finite.

Since $T,S\in \mathcal{G}$ there is a finite evaluation procedure for $(T,S)$,
and we can find a sequential ordering for the resolvent evaluations.
Using this ordering, 
we label the resolvents
$J_1,J_2,\dots,J_n$,
where $J_i$ is one of $J_{\alpha A}$, $J_{\beta B}$, or $J_{\gamma C}$ 
for some $\alpha>0$, $\beta>0$, or $\gamma>0$
for each $i=1,\dots,n$.
We call $z_i$ the point at which $J_i$ is evaluated and $x_i=J_i(z_i)$ for $i=1,\dots,n$.
In the process of evaluating $Tz_0$ and $Sz_0$,
we get $z_0,z_1,x_1,z_2,x_2,\dots,z_n,x_n$, in this order.
Since scalar multiplication and vector addition are the only operations allowed aside from resolvent evaluations, 
$z_i$ is defined as a linear combination of $z_0,z_1,x_1,z_2,x_2,\dots,z_{i-1},x_{i-1}$
for each $i=1,\dots,n$, by nature of the ordering.
Likewise, $Tz_0$ can be expressed as a linear combination
of $z_0,z_1,x_1,z_2,x_2,\dots,z_n,x_n$.
%,and the fixed point condition can be written as $Tz_0-z_0=0$.
Without loss of generality, assume $J_{\alpha A}$, $J_{\beta B}$, and $J_{\gamma C}$ are all used least once with some $\alpha>0$,
$\beta>0$, and $\gamma>0$.
Otherwise, if, for example, $J_{\alpha A}$ is never used, 
we let $z_{n+1}=0$ and $J_{n+1}=J_{A}$ to fix the issue.
This is equivalent to evaluating the resolvent at the end and not using the output.

Say $J_{\alpha A}$, $J_{\beta B}$, and $J_{\gamma C}$ 
are evaluated $n_A$, $n_B$, and $n_C$ times, respectively.
So $n_A+n_B+n_C=n$.
Let $a(1),a(2),\dots,a(n_A)\in\{1,2,\dots,n\}$ be distinct indicies
and let
$\alpha (1),\alpha (2),\dots,\alpha (n_A)>0$ 
be parameters
so that
\[
x_{a(\ell)}=J_{a(\ell)}(z_{a(\ell)})=J_{\alpha(\ell) A}(z_{a(\ell)}).
\]
In other words, $x_{a(1)},x_{a(2)},\dots,x_{a(n_A)}$ are the outputs of 
the resolvents of $A$.
%(Define $a(1),a(2),\dots,a(n_A)$ so that they are distinct, i.e., $x_{a(1)},x_{a(2)},\dots,x_{a(n_A)}$ must cover all $n_A$ outputs of the resolvents of $A$.)
Likewise, let
$b(1),b(2),\dots,b(n_B)\in\{1,2,\dots,n\}$
and
$c(1),c(2),\dots,c(n_C)\in\{1,2,\dots,n\}$ be distinct indices
and let 
$\beta (1),\beta (2),\dots,\beta (n_B)>0$
and
$\gamma (1),\gamma (2),\dots,\gamma (n_C)>0$ 
be parameters so that
\[
x_{b(\ell)}=J_{b(\ell)}(z_{b(\ell)})=J_{\beta(\ell) B}(z_{b(\ell)})
\]
for $\ell=1,\dots,n_B$ and
\[
x_{c(\ell)}=J_{c(\ell)}(z_{c(\ell)})=J_{\gamma(\ell) C}(z_{c(\ell)})
\]
for $\ell=1,\dots,n_C$.

We express the evaluation of $Tz_0$
with the following system of linear and non-linear equalities:
\begin{align*}
0=&
\setlength\arraycolsep{5pt}
\begin{bmatrix}
*&1 & 0&0&0&0&\cdots &0&0&0\\
*&*&*&1&0&0&\cdots &0&0&0\\
*&*&*&*&*&1&\cdots &0&0&0\\
&&&\vdots&&&&\\
&&&\vdots&&&&\\
*&*&*&*&*&*&\cdots &1&0&0\\
*&*&*&*&*&*&\cdots &*&*&1
\end{bmatrix}
\begin{bmatrix}
z_0\\
z_1\\
x_1\\
z_2\\
x_2\\
z_3\\
\vdots\\
z_n\\
x_n\\
Tz_0
\end{bmatrix}
\\
&x_1=J_1(z_1), \,x_2=J_2(z_2),\,\dots,\,x_n=J_n(z_n),
\end{align*}
where the $*$ denote unspecified scalar coefficients.
(Each scalar in the matrix should be interpreted as a $d\times d$ block.
We have seen this notation in the 
proof of Theorem~\ref{thm:2op-necessary}.)
Each linear equality except the last one defines $z_i$ for $i=1,\dots,n$.
The last linear equality defines $T(z_0)$.
With Gaussian elimination, we obtain the simpler equivalent system
\begin{align}
0=&
\setlength\arraycolsep{5pt}
\begin{bmatrix}
*&1 & 0&0&0&0&\cdots &0&0&0\\
*&\bm{0}&*&1&0&0&\cdots &0&0&0\\
*&\bm{0}&*&\bm{0}&*&1&\cdots &0&0&0\\
&&&\vdots&&&&\\
&&&\vdots&&&&\\
*&\bm{0}&*&\bm{0}&*&\bm{0}&\cdots &1&0&0\\
*&\bm{0}&*&\bm{0}&*&\bm{0}&\cdots &\bm{0}&*&1
\end{bmatrix}
\begin{bmatrix}
z_0\\
z_1\\
x_1\\
z_2\\
x_2\\
z_3\\
\vdots\\
z_n\\
x_n\\
Tz_0
\end{bmatrix}
\nonumber
\\
&x_1=J_1(z_1), \,x_2=J_2(z_2),\,\dots,\,x_n=J_n(z_n).
\nonumber
\end{align}
The \textbf{boldface symbols} denote where to pay attention in the linear systems.
To summarize our progress, we have set up the notation and shown that 
these linear and non-linear equalities define the evaluation of $T$ at any input $z_0$.

We now take a moment to consider what happens with DRS under a similar formulation.
Although this discussion is not part of the proof, it will provide us with a sense of direction.
Under this formulation, DRS has the form
\begin{align*}
\setlength\arraycolsep{5pt}
0=&
\begin{bmatrix}
-1&1&0&0&0&0\\
1&0&-2&1&0&0\\
-1&0&\theta&0&-\theta&1
\end{bmatrix}
\begin{bmatrix}
z_0\\
z_1\\
x_1\\
z_2\\
x_2\\
Tz_0
\end{bmatrix}\\
&x_1=J_{\alpha A}(z_1), \,x_2=J_{\alpha B}(z_2).
\end{align*}
With DRS %(a fixed-point encoding for \textup{(2op-$\reals^d$)})
we can combine the linear equalities
with the fixed point condition $z_0=Tz_0$
to show that $x_1=x_2$ when the input $z_0$ is a fixed point.
More specifically, we add $z_0=Tz_0$ to the linear system
 \begin{align*}
\setlength\arraycolsep{5pt}
0=&
\begin{bmatrix}
-1&1&0&0&0&0\\
1&0&-2&1&0&0\\
\bm{-1}&\bm{0}&\bm{0}&\bm{0}&\bm{0}&\bm{1}\\
-1&0&\theta&0&-\theta&1
\end{bmatrix}
\begin{bmatrix}
z_0\\
z_1\\
x_1\\
z_2\\
x_2\\
Tz_0
\end{bmatrix}
\end{align*}
%(which represents the evaluation of $T$ at a fixed point $z_0$)
and left-multiply
\[
\setlength\arraycolsep{5pt}
(1/\theta)
\begin{bmatrix}
0&0&-1&1
\end{bmatrix}
\]
to get
 \[
 \setlength\arraycolsep{5pt}
 0=
\begin{bmatrix}
0&0&1&0&-1&0
\end{bmatrix}
\begin{bmatrix}
z_0\\
z_1\\
x_1\\
z_2\\
x_2\\
Tz_0
\end{bmatrix}.
\]
This is equivalent to combining
\begin{align*}
Tz_0&=z_0+\theta(x_2-x_1)\\
z_0&=Tz_0
\end{align*}
to conclude $x_1=x_2$ when the input $z_0$ is a fixed point.
Remember,  $x_1$ and $x_2$ are the points where $A$ and $B$ are (indirectly) evaluated.
($J_{\alpha A}$ is directly evaluated at $z_1$, so $z_1\in x_1+\alpha Ax_1$,
and we
indirectly obtain the output $(1/\alpha)(z_1-x_1)\in Ax_1$. Likewise, we 
indirectly obtain the output $(1/\alpha)(z_2-x_2)\in Bx_2$.)

These arguments show that with DRS, $T$ evaluates $A$ and $B$ at the same point when the input $z_0$ is a fixed point.
(Further arguments would establish that the same point is a solution by showing that the outputs of $A$ and $B$ sum to $0$.)
In general, given a fixed-point encoding $(T,S)$ and a fixed point $z_0$,
shouldn't the evaluation of $T$ examine the output of all (2 or 3) operators at the solution $Sz_0$?
Otherwise how could $z_0=Tz_0$ certify that $Sz_0$ is a solution?

We now return to the setup of \eqref{eq:3op}.
Can we combine  the linear equalities and the fixed-point condition $z_0=Tz_0$ to show that $A$, $B$, and $C$ are evaluated at a same single point? It turns out that we cannot. 
(This by itself is not a contradiction. Just because we can't show something with one approach doesn't mean it can't be shown.)
However, this approach runs into a problem. If we proceed to construct a counter example to draw a contradiction, we run into certain difficulties.
We need a modified approach.

Instead,  assume the input $z_0$  furthermore satisfies the additional linear equalities
\begin{align}
x_{a(1)}&=x_{a(2)}=\dots=x_{a(n_A)}\nonumber\\
x_{b(1)}&=x_{b(2)}=\dots=x_{b(n_B)}\label{e-xequal}\\
x_{c(1)}&=x_{c(2)}=\dots=x_{c(n_C)}\nonumber
\end{align}
in addition to the fixed-point condition $z_0=Tz_0$.
Since $(T,S)$ is a fixed point encoding, $z_0=Tz_0$ should certify that $Sz_0$ is a solution, regardless of the additional assumptions \eqref{e-xequal}.
Now can we use the linear equalities defining $T$, $z_0=Tz_0$, and \eqref{e-xequal} to show that $x_{a(1)}=x_{b(1)}=x_{c(1)}$?
No we cannot. Let's see why.

We add the fixed-point condition $z_0=Tz_0$ to the system of linear equalities and perform Gaussian elimination:
\begin{align}
0=&
\setlength\arraycolsep{5pt}
\underbrace{
\begin{bmatrix}
*&1 & 0&0&0&0&\cdots &0&0&0\\
*&0&*&1&0&0&\cdots &0&0&0\\
*&0&*&0&*&1&\cdots &0&0&0\\
&&&\vdots&&&&\\
&&&\vdots&&&&\\
*&0&*&0&*&0&\cdots &1&0&0\\
\bm{-1}&\bm{0}&\bm{0}&\bm{0}&\bm{0}&\bm{0}&\cdots &\bm{0}&\bm{0}&\bm{1}\\
\bm{*}&\bm{0}&\bm{*}&\bm{0}&\bm{*}&\bm{0}&\cdots &\bm{0}&\bm{*}&\bm{0}
\end{bmatrix}
}_{=M}
\underbrace{
\begin{bmatrix}
z_0\\
z_1\\
x_1\\
z_2\\
x_2\\
z_3\\
\vdots\\
z_n\\
x_n\\
Tz_0
\end{bmatrix}}_{=v}
\label{eq:3op-system}
\\
&x_1=J_1(z_1), \,x_2=J_2(z_2),\,\dots,\,x_n=J_n(z_n).
\nonumber
\end{align}
Define the last row of $M$ to be $m$.
Let $N$ be a matrix such that 
\begin{align*}
0=
N
\begin{bmatrix}
z_0\\
z_1\\
x_1\\
\vdots\\
z_n\\
x_n\\
Tz_0
\end{bmatrix}
\quad\Leftrightarrow\quad
\eqref{e-xequal}.
% \left(
% \begin{array}{c}
% x_{a(1)}=x_{a(2)}=\dots=x_{a(n_A)}\\
% x_{b(1)}=x_{b(2)}=\dots=x_{b(n_B)}\\
% x_{c(1)}=x_{c(2)}=\dots=x_{c(n_C)}
% \end{array}\right).
\end{align*}
More specifically, let $N\in \reals^{(n-3)d\times ({2n+2})d}$ contain
only $0$, $1$, and $-1$ and let the nonzeros only be on the columns corresponding to the $x$-variables (columns number $3,5,\dots,2n+1$).
Note that \eqref{e-xequal} represents $n-3$ linear equalities, and this is reflected as the number of rows in $N$.
The positions of the nonzeros in $N$ depend on the ordering of the resolvent evaluations, and $N$ is not unique.
Let
\[
L=\begin{bmatrix}
M\\N
\end{bmatrix},
\qquad
\tilde{N}=\begin{bmatrix}m\\N\end{bmatrix},
\]
where $m$ is the last row of $M$.
So $0=Lv$ means $v$ satisfies the linear equalities \eqref{eq:3op-system} and \eqref{e-xequal}.
Let's try to combine rows of $0=Lv$ to establish $x_{a(1)}=x_{b(1)}=x_{c(1)}$.
By Lemma~\ref{lm-ge}, $0=Lv$ implies $x_{a(1)}=x_{b(1)}$ and $x_{b(1)}=x_{c(1)}$
if and only if we can linearly combine the rows of $0=Lv$ to get
$x_{a(1)}=x_{b(1)}$ and $x_{b(1)}=x_{c(1)}$.

% %The \textbf{boldface symbols} denote where to pay attention in the linear system.
% Consider the system of 
% linear equalities $Mv=0$ of \eqref{eq:3op-system}
% combined with the
% %$n-3$
% linear equalities
% \begin{align}
% x_{a(1)}&=x_{a(2)}=\dots=x_{a(n_A)}\nonumber\\
% x_{b(1)}&=x_{b(2)}=\dots=x_{b(n_B)}\label{e-xequal}\\
% x_{c(1)}&=x_{c(2)}=\dots=x_{c(n_C)}.\nonumber
% \end{align}

Every row of $M$ except the last one cannot be used in the linear combination to prove a linear equality only involving the $x$-variables,
as any nonzero contribution will place a nonzero component on a column corresponding to $z_i$ or $Tz_0$ (column number $1,2,4,6\dots,2n,2n+2$).
This leaves us with the rows of 
\[
0=\begin{bmatrix}m\\N\end{bmatrix}v
\]
  to show $x_{a(1)}=x_{b(1)}$ and $x_{b(1)}=x_{c(1)}$.
The linear equality $0=Nv$ enforces
%\eqref{e-xequal}
\begin{align*}
x_{a(1)}&=x_{a(2)}=\dots=x_{a(n_A)}\nonumber\\
x_{b(1)}&=x_{b(2)}=\dots=x_{b(n_B)}\\
x_{c(1)}&=x_{c(2)}=\dots=x_{c(n_C)},\nonumber
\end{align*}
which are $n-3$ equalities.
Therefore, $\mathcal{N}(N)$, the nullspace of $N$, has codimension $(n-3)d$.
The linear equality $0=mv$ can establish $x_{a(1)}=x_{b(1)}$ or $x_{b(1)}=x_{c(1)}$, but not both.
If $0=\tilde{N}v$ implies both $x_{a(1)}=x_{b(1)}$ and $x_{b(1)}=x_{c(1)}$,
then in total $0=\tilde{N}v$ enforces $x_1=\dots=x_n$, which are $n-1$ equalities.
So $\mathcal{N}(\tilde{N})$, the nullspace of $\tilde{N}$,  would have codimension $(n-1)d$ or less, but this reduction in codimension by $2d$ is a contradiction since $m$ is just one row.

Therefore, the linear system $0=Lv$ does not imply both $x_{a(1)}=x_{b(1)}$ and $x_{b(1)}=x_{c(1)}$.
This by itself is not a contradiction.
Rather, we use this fact to construct a counter example, $A,B,C\in \mathcal{M}(\reals^d)$ such that $(T,S)$ fails to be a fixed-point encoding.
The additional assumption \eqref{e-xequal} will help us in this construction.

We construct the counter example for the case $d=1$.
When $d>1$, we can use the same $1$ dimensional construction repeated for the $d$ coordinates.
More specifically, if $A\in \mathcal{M}(\reals)$, then $\tilde{A}$ defined with
\[
\tilde{A}(x)=(A(x_1),A(x_2),\dots,A(x_d)),
\]
where $x=(x_1,x_2,\dots,x_d)\in \reals^d$, satisfies $\tilde{A}\in \mathcal{M}(\reals^d)$.
This sort of construction based on the $1$ dimensional counter example will provide a $d$ dimensional counter example.

%The counter example 
%maximal monotone operators $A$, $B$, and $C$ on $\reals$ for which $(T,S)$ fails to be a fixed-point encoding.

Assume $d=1$.
By Lemma~\ref{lm-ge},
there is a specific instance
\[
v'=(z_0',z_1',x_1',z_2',x_2',\dots,z_n',x_n',T(z_0'))\in \reals^{2n+2}
\]
that satisfies $Mv'=0$ and the linear equalities of \eqref{e-xequal},
but
$x_{a(1)}'\ne x_{b(1)}'$ or
$x_{b(1)}'\ne x_{c(1)}'$ or both.
Without loss of generality, say
$x_{b(1)}'\ne x_{c(1)}'$.

Define $A$ such that
\[
J_{\alpha (i)A}(z_{a(i)}')=x_{a(1)}'
\]
for all $i=1,\dots,n_A$. In particular, we achieve this by defining
\[
A(x_{a(1)}')=
\left[\min_{i=1,\dots,n_A} \left(z_{a(i)}'-x_{a(1)}'\right)/\alpha(i),\max_{i=1,\dots,n_A} \left(z_{a(i)}'-x_{a(1)}'\right)/\alpha(i)\right].
\]
For the moment, leave $A(x)$ for $x\ne x_{a(1)}'$ unspecified.
Define $B(x_{b(1)}')$ and $C(x_{c(1)}')$ likewise.
%When we later fully specify $A$, $B$, and $C$, we will make sure they are maximal monotone.
By construction, $z_0'=T(A,B,C,z_0')$, even though $A$, $B$, and $C$ are not yet fully specified.
Write $x'=S(z_0')$. We have $x'\ne x_{b(1)}'$ or $x'\ne x_{c(1)}'$
since $x_{b(1)}'\ne x_{c(1)}'$.
Without loss of generality, let $x'\ne x_{c(1)}'$.

Now we define
\[
A(x)=
\left\{
\begin{array}{ll}
(x-x_{a(1)}')+\min\{A(x_{a(1)}')\}&\text{for }x<x_{a(1)}'\\
(x-x_{a(1)}')+\max\{A(x_{a(1)}')\}&\text{for }x>x_{a(1)}'
\end{array}
\right.
\]
and
\[
B(x)=
\left\{
\begin{array}{ll}
(x-x_{b(1)}')+\min\{{\color{red}B}(x_{b(1)}')\}&\text{for }x<x_{b(1)}'\\
(x-x_{b(1)}')+\max\{{\color{red}B}(x_{b(1)}')\}&\text{for }x>x_{b(1)}'.
\end{array}
\right.
\]
(This makes $A$ and $B$ maximal monotone.)
By construction, $(A+B)(x')$ is a bounded subset of $\reals$,
and $C(x')$ is unspecified. Depending on whether $x'<x_{c(1)}'$ or $x'>x_{c(1)}'$, 
we can make $C(x')$ an arbitrarily small or large value, respectively
(and still have $C$ be monotone).
In either case, we make $C(x')$ single-valued and so small or so large that 
$0\notin (A+B+C)(x')$.
We extend the definition of $C$ to all of $\reals$ to make it maximal monotone.

So we have maximal monotone operators $A$, $B$, and $C$,
such that $z_0'=T(A,B,C,z_0')$ but the $x'=S(z_0')$
does not satisfy $0\in (A+B+C)x'$.
This contradicts the assumption that $(T,S)$ is a fixed-point encoding.
\qed

\section{Attainment of 3 operator resolvent-splitting with minimal lifting}
\label{s:attainment}
Loosely speaking, we say $(\vT,S)$ is a a resolvent-splitting with \emph{$\ell$-fold lifting} for the problem class \eqref{eq:3op} 
if $(\vT,S)$ is a fixed-point encoding and
\[
\vT(A,B,C,\cdot):\reals^{\ell d}\rightarrow \reals^{\ell d},\qquad S(A,B,C,\cdot):\reals^{\ell d}\rightarrow \reals^{d}
\]
is constructed with scalar multiplication, vector addition, and resolvent evaluations.
Note that $1$-fold lifting corresponds to no lifting.
Frugality is defined analogously.
We define these terms informally since they are not used in a rigorous statement.
Theorem~\ref{thm:3op-impossibility} states a resolvent-splitting for \eqref{eq:3op}
requires lifting. Then how much?
The answer is $2$-fold lifting.

A standard trick to solve \eqref{eq:3op} is to ``copy'' variables and form an enlarged problem
\begin{equation*}
\underset{x_1,x_2,x_3\in \mathbb{R}^d}{\mbox{find}}\quad
0\in 
\begin{bmatrix}
Ax_1\\Bx_2\\Cx_3
\end{bmatrix}
 +N_{\{(x_1,x_2,x_3)\,|\,x_1=x_2=x_3\}}(x_1,x_2,x_3),
\end{equation*}
where $N_K$ is the normal cone operator with respect to the set $K$.
By applying DRS in an appropriately scaled space, we get
the parallel proximal algorithm (PPXA)  \cite{combettes2008,combettes2011}, which generalizes
Spingarn's method of partial inverse \cite{spingarn1985}.
The PPXA splitting is given by $(\vT,S)$
% \begin{align}
% \bar{z}&=(1/3)(z_A+z_B+z_C)\nonumber\\
% T_A(\vz)&= z_A+J_{\alpha A}(2\bar{z}-z_A)-\bar{z}\nonumber\\
% T_B(\vz)&= z_B+J_{\alpha B}(2\bar{z}-z_B)-\bar{z}\label{eq:drs-splitting}\\
% T_C(\vz)&= z_C+J_{\alpha C}(2\bar{z}-z_C)-\bar{z}\nonumber\\
% S(\vz)&=J_{\alpha A}(2\bar{z}-z_A)
% ,\nonumber
% \end{align}
% where $\alpha>0$ and $\vz=(z_A,z_B,z_C)$.
 \begin{align}
 x_A&=J_{(\gamma/\omega_A) A}(z_A)\nonumber\\
 x_B&=J_{(\gamma/\omega_B) B}(z_B)\nonumber\\
 x_C&=J_{(\gamma/\omega_C) C}(z_C)\nonumber\\
 \bar{z}&=\omega_Az_A+\omega_Bz_B+\omega_Cz_C\nonumber\\
 \bar{x}&=\omega_Ax_A+\omega_Bx_B+\omega_Cx_C\nonumber\\
 T_A(\vz)&=z_A+\theta(2\bar{x}-\bar{z}-x_A)\tag{PPXA}\\
 T_B(\vz)&=z_B+\theta(2\bar{x}-\bar{z}-x_B)\nonumber\\
 T_C(\vz)&=z_C+\theta(2\bar{x}-\bar{z}-x_C)\nonumber\\
S(\vz)&=J_{{\color{red}(\gamma/\omega_A)} A}(z_A),\nonumber
\end{align}
 where $\omega_A,\omega_B,\omega_C>0$ satisfy $\omega_A+\omega_B+\omega_C=1$ and $\theta\in(0,2)$.
 %and $\Pi(\vz)=\omega_Az_A+\omega_Bz_B+\omega_Cz_C$.
This frugal, unconditionally convergent resolvent-splitting 
uses $3$-fold lifting, since
$\vT=(T_A,T_B,T_C):\reals^{3d}\rightarrow\reals^{3d}$.

So constructing a resolvent-splitting for \eqref{eq:3op}
is impossible with $1$-fold lifting,
but it is possible with $3$-fold lifting.
It turns out that $2$-fold lifting is sufficient,
%there is a frugal, unconditionally convergent resolvent-splitting with
and we therefore call $2$-fold lifting the \emph{minimal lifting} for \eqref{eq:3op}.

\subsection{Attainment result}
\begin{theorem}
\label{thm:attainment}
The pair $(\vT,S)$,
where $\vT:\reals^{2d}\rightarrow\reals^{2d}$
and $S:\reals^{2d}\rightarrow\reals^{d}$, defined as
\begin{align*}
x_1&=J_{\alpha A}(z_1)\nonumber\\
x_2&=J_{\alpha B}(x_1+z_2)\nonumber\\
x_3&=J_{\alpha C}\left(x_1-z_1+x_2-z_2\right)\nonumber\\
T_1(\vz)&=z_1+\theta(x_3- x_1)\nonumber\\
T_2(\vz)&=z_2+\theta(x_3- x_2)\nonumber\\
%S(\vz)&=\eta_1 x_1+\eta_2 x_2+(1-\eta_1-\eta_2) x_3,
S(\vz)&=(1/3)(x_1+x_2+x_3)
\end{align*}
with $\vz=(z_1,z_2)$ and $\vT=(T_1,T_2)$,
is a fixed-point encoding,
and $(\vT,S)$
converges unconditionally for $\theta\in(0,1)$ and $\alpha>0$.
%, and $\eta_1,\eta_2\in \reals$.
\end{theorem}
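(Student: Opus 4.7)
The plan is to prove the theorem in two parts: first verify that $(\vT,S)$ is a fixed-point encoding, and then establish unconditional convergence of $\vz^{k+1}=\vT\vz^k$ for $\theta\in(0,1)$.

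For the fixed-point encoding, I would argue by direct computation in both directions, in the spirit of the opening of the proof of Theorem~\ref{thm:2op-necessary}. In the forward direction, given $x^\star\in\zer(A+B+C)$, pick $\tilde a\in Ax^\star$, $\tilde b\in Bx^\star$, $\tilde c\in Cx^\star$ with $\tilde a+\tilde b+\tilde c=0$, set $z_1=x^\star+\alpha\tilde a$ and $z_2=\alpha\tilde b$, and check that each of the three resolvents returns $x^\star$. In particular, the input to $J_{\alpha C}$ simplifies to $x^\star+\alpha\tilde c$ precisely because $\tilde a+\tilde b+\tilde c=0$, so $x_3=x^\star$ and the updates give $T_i(\vz)=z_i$ and $S(\vz)=x^\star$. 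Conversely, if $\vT(\vz^\star)=\vz^\star$, then $x_3=x_1$ and $x_3=x_2$, so $x_1=x_2=x_3=:x^\star$; reading off the residuals from the three resolvent identities yields $(z_1-x^\star)/\alpha\in Ax^\star$, $z_2/\alpha\in Bx^\star$, and $(x^\star-z_1-z_2)/\alpha\in Cx^\star$, whose sum is $0$, giving $0\in(A+B+C)S(\vz^\star)$.

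For convergence, I would write $\vT=(1-\theta)I+\theta R$, where $R$ is the map obtained by setting $\theta=1$ in the updates of $T_1$ and $T_2$. Once $R:\reals^{2d}\rightarrow\reals^{2d}$ is shown nonexpansive in the standard product norm, $\vT$ is averaged for $\theta\in(0,1)$ and standard Krasnoselskii--Mann theory gives $\vz^k\to\vz^\star$ with $\vT\vz^\star=\vz^\star$ whenever a fixed point exists; continuity of $S$ then yields $S(\vz^k)\to S(\vz^\star)\in\zer(A+B+C)$. Writing $\delta z_i=z_i-z_i'$ and $\delta x_j=x_j-x_j'$, expanding $\|R(\vz)-R(\vz')\|^2$ and cancelling $\|\delta z_1\|^2+\|\delta z_2\|^2$ reduces nonexpansiveness to
\[
\|\delta x_1-\delta x_3\|^2+\|\delta x_2-\delta x_3\|^2\le 2\langle\delta z_1,\delta x_1-\delta x_3\rangle+2\langle\delta z_2,\delta x_2-\delta x_3\rangle.
\]
Summing the firm-nonexpansiveness inequalities for $J_{\alpha A}$, $J_{\alpha B}$, $J_{\alpha C}$ at the respective inputs $z_1$, $x_1+z_2$, $x_1-z_1+x_2-z_2$ bounds the right-hand side below by
\[
\|\delta x_1\|^2+\|\delta x_2\|^2+\|\delta x_3\|^2-\langle\delta x_1,\delta x_2\rangle-\langle\delta x_1,\delta x_3\rangle-\langle\delta x_2,\delta x_3\rangle,
\]
and the symmetric identity $\|u\|^2+\|v\|^2+\|w\|^2-\langle u,v\rangle-\langle u,w\rangle-\langle v,w\rangle=\tfrac12(\|u-v\|^2+\|u-w\|^2+\|v-w\|^2)$ rewrites this as $\tfrac12(\|\delta x_1-\delta x_2\|^2+\|\delta x_1-\delta x_3\|^2+\|\delta x_2-\delta x_3\|^2)$. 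Dropping the nonnegative $\|\delta x_1-\delta x_2\|^2$ and multiplying by $2$ yields exactly the required inequality.

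The main obstacle I anticipate is finding the right way to aggregate the three firm-nonexpansiveness inequalities: naive addition leaves asymmetric cross-terms produced by the fact that $A$ feeds into $B$ and both feed into $C$, and it is only the symmetric identity above that supplies precisely enough slack (equivalently, the discarded $\|\delta x_1-\delta x_2\|^2$) to absorb that asymmetry. This also explains why the particular linear combinations $z_1$, $x_1+z_2$, $x_1-z_1+x_2-z_2$ in the resolvent inputs, and the choice $S=(x_1+x_2+x_3)/3$, are not arbitrary; any other combination would break the cancellation and the averagedness of $\vT$.
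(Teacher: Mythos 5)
Your proof is correct. The fixed-point encoding argument matches the paper's verbatim in substance: both directions are verified by the same substitutions (your $\vz^\star=(x^\star+\alpha\tilde a,\,\alpha\tilde b)$ is the paper's choice up to the WLOG rescaling $\alpha=1$), and the converse direction reads off the same three residuals and sums them to zero.

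For convergence, you take a genuinely different route from the paper while using the same raw ingredients. The paper factors the map at $\theta=1$ through an auxiliary operator $\vU(\vz)=\bigl(J_A z_1 - z_1,\ J_B(z_2+x_1)-z_2\bigr)$ and a ``duplicator'' $M=\begin{bsmallmatrix}I&I\\ I&I\end{bsmallmatrix}$, writing $\vT=-\vU+\begin{bsmallmatrix}J_C\\ J_C\end{bsmallmatrix}\circ M\circ\vU$, and then proves nonexpansiveness by a two-stage chain $\|\vT\vy-\vT\vz\|^2\le\|\vU\vy-\vU\vz\|^2\le\|\vy-\vz\|^2$: the first inequality uses firm nonexpansiveness of $J_C$ and a sign-flipping identity $v^\top Mu-2v^\top u=v^\top Nu$ with $N=\begin{bsmallmatrix}-I&I\\ I&-I\end{bsmallmatrix}$; the second uses firm nonexpansiveness of $J_A$, $J_B$ and completes a square. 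You instead expand $\|R\vz-R\vz'\|^2$ directly, sum the three firm-nonexpansiveness inequalities in one shot, and invoke the symmetric identity $\|u\|^2+\|v\|^2+\|w\|^2-\langle u,v\rangle-\langle u,w\rangle-\langle v,w\rangle=\tfrac12\bigl(\|u-v\|^2+\|u-w\|^2+\|v-w\|^2\bigr)$ before discarding the nonnegative $\|\delta x_1-\delta x_2\|^2$. Your version is flatter and more elementary; the paper's version buys a structural decomposition (the operator $\vU$ plays a role analogous to the reflected resolvents in two-operator DRS) that may suggest how to generalize the scheme. Both are complete, and your closing observation about the exact slack ($\|\delta x_1-\delta x_2\|^2$) is a correct and useful accounting of why the specific resolvent inputs work.
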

Therefore, $(\vT,S)$ for any $\theta\in (0,1)$ 
is a frugal, unconditionally convergent, resolvent-splitting with minimal lifting for \eqref{eq:3op}.
When $B=0$, the splitting of Theorem~\ref{thm:attainment} reduces to DRS.
In this sense, this splitting is a direct generalization of DRS with minimal lifting.

\emph{Remark.}
%The convergence of splitting \eqref{eq:drs-splitting} 
%follows immediately from reducing it to an instance of DRS.
To the best of the author's knowledge, the splitting of
Theorem~\ref{thm:attainment} cannot be reduced to an instance of a known splitting method.
This is why Theorem~\ref{thm:attainment} is proved from first principles.

\subsection{Proof of Theorem~\ref{thm:attainment}}
Throughout the proof, write $\vy=(y_1,y_2)$ and $\vz=(z_1,z_2)$.
Without loss of generality, assume $\alpha=1$. 
We first show that $(\vT,S)$ is a fixed-point encoding.

Assume $\vz$ is a fixed point of $\vT$. Since $\vz$ is a fixed point, we have $T_1(\vz)=z_1$ and $T_2(\vz)=z_2$, and this implies $x_1=x_2=x_3$.
Write
\begin{align*}
a&=z_1-x_1\\
b&=x_1+z_2-x_2\\
c&=x_1-z_1+x_2-z_2-x_3.
\end{align*}
Add the three and use $x_1=x_3$ to get
\begin{align*}
a+b+c=0.
\end{align*}
Since $a\in Ax_1$,  $b\in Bx_2$, and $c\in Cx_3$,
by the definitions of $x_1$, $x_2$, and $x_3$, 
this proves $x_1=x_2=x_3$ is a solution to \eqref{eq:3op}.

Now assume $x^\star$ is a solution to \eqref{eq:3op}, 
and let 
$a\in Ax^\star$,  $b\in Bx^\star$, and $c\in Cx^\star$
so that $a+b+c=0$. 
We then define
\[
\vz^\star=(a+x^\star,b)
\]
It is straightforward to verify that $\vT(\vz^\star)=\vz^\star$ and $S(\vz^\star)=x^\star$.

Next we show that $(\vT,S)$ converges unconditionally for $\theta\in (0,1)$.
We show this by showing $\vT$ is nonexpansive for $\theta=1$,
and appealing to the KM iteration theorem \cite[Proposition 5.16]{BauschkeCombettes2017_convex}, which states that an averaged nonexpansive iteration converges to a fixed point, if a fixed point exists.

Let $\theta=1$.
Define $M:\reals^{2d}\rightarrow \reals^{2d}$ with
\[
M(x_1,x_2)=
\begin{bmatrix}
x_1+x_2\\
x_1+x_2
\end{bmatrix},
\] 
which is  the linear operator corresponding to the matrix
\[
\begin{bmatrix}
I& I\\
I& I
\end{bmatrix}\in \reals^{2d\times2d}.
\]
Define $\vU$ as
\[
\vU(\vz)=
\begin{bmatrix}
J_A(z_1)-z_1\\
J_B(z_2+x_1)-z_2
\end{bmatrix}\in \reals^{2d},
\]
and we can write
\[
\vT=-\vU+
\begin{bmatrix}
J_C\\
J_C
\end{bmatrix}
\circ
M
\circ \vU.
\]
To clarify, $\circ$ here denotes the composition of operators from $\reals^{2d}$ to $\reals^{2d}$.
Define
\[
N=\begin{bmatrix}
-I& I\\
I& -I
\end{bmatrix}\in \reals^{2d\times2d}.
\]
Then
\begin{align*}
\|\vT&(\vy)-\vT(\vz)\|^2\\
=&
\|\vU(\vy)-\vU(\vz)\|^2
+\left\|
\begin{bmatrix}
J_C\\
J_C
\end{bmatrix}
\circ
M
\circ \vU(\vy)
-
\begin{bmatrix}
J_C\\
J_C
\end{bmatrix}
\circ
M
\circ \vU (\vz)
\right\|^2\\
&-2
\left\langle
\vU(\vy)-\vU(\vz),
\begin{bmatrix}
J_C\\
J_C
\end{bmatrix}
\circ
M
\circ \vU(\vy)
-
\begin{bmatrix}
J_C\\
J_C
\end{bmatrix}
\circ
M
\circ \vU (\vz)
\right\rangle\\
\le
&\|\vU(\vy)-\vU(\vz)\|^2\\
&+\left<
M
\circ (\vU(\vy)-\vU(\vz)),
\begin{bmatrix}
J_C\\
J_C
\end{bmatrix}
\circ
M
\circ \vU(\vy)
-
\begin{bmatrix}
J_C\\
J_C
\end{bmatrix}
\circ
M
\circ \vU (\vz)
\right>\\
&-2
\left\langle
\vU(\vy)-\vU(\vz),
\begin{bmatrix}
J_C\\
J_C
\end{bmatrix}
\circ
M
\circ \vU(\vy)
-
\begin{bmatrix}
J_C\\
J_C
\end{bmatrix}
\circ
M
\circ \vU (\vz)
\right\rangle\\
=
&\|\vU(\vy)-\vU(\vz)\|^2\\
&+
(\vU(\vy)-\vU(\vz))^T
N
\left(
\begin{bmatrix}
J_C\\
J_C
\end{bmatrix}
\circ
M
\circ \vU(\vy)
-
\begin{bmatrix}
J_C\\
J_C
\end{bmatrix}
\circ
M
\circ \vU (\vz)
\right)\\
=
&\|\vU(\vy)-\vU(\vz)\|^2.
\end{align*}
The first line follows from simply plugging in the expression for $\vT$ and expanding the squares.
The second line, the inequality, follows from firm nonexpansiveness of $J_C$.
The third line follows from the reasoning
\[
v^T\begin{bmatrix}
I&I\\
I&I
\end{bmatrix}u-
2v^Tu=
% v^T\begin{bmatrix}
% I&I\\
% I&I
% \end{bmatrix}u-
% v^T\begin{bmatrix}
% 2I&0\\
% 0&2I
% \end{bmatrix}u=
v^T\begin{bmatrix}
-I&I\\
I&-I
\end{bmatrix}u=v^TNu
\]
for any $v,u\in\reals^{2d}$.
The last line follows from recognizing that the second term is $0$ with the reasoning
\[
\begin{bmatrix}
a\\b
\end{bmatrix}^T
N
\begin{bmatrix}
c\\c
\end{bmatrix}=
\begin{bmatrix}
a\\b
\end{bmatrix}^T
\begin{bmatrix}
-c+c\\c-c
\end{bmatrix}=0
\]
for any $a,b,c\in\reals^{d}$.
Next we have
\begin{align*}
\|\vU&(\vy)-\vU(\vz)\|^2\\
=&
\|\vy-\vz\|^2+\left\|
\begin{bmatrix}
J_A(y_1)-J_A(z_1)\\
J_B(y_2+J_A(y_1))-J_B(z_2+J_A(z_1))
\end{bmatrix}
\right\|^2\\
&-2\left<
\vy-\vz,
\begin{bmatrix}
J_A(y_1)-J_A(z_1)\\
J_B(y_2+J_A(y_1))-J_B(z_2+J_A(z_1))
\end{bmatrix}
\right>\\
= &
\|\vy-\vz\|^2
-\|J_A(y_1)-J_A(z_1)\|^2
-\|J_B(y_2+J_A(y_1))-J_B(z_2+J_A(z_1))\|^2\\
&
-
2\left(\left<y_1-z_1,J_A(y_1)-J_A(z_1)\right>
-\|J_A(y_1)-J_A(z_1)\|^2
\right)\\
&
-
2\left<y_2-z_2,J_B(y_2+J_A(y_1))-J_B(z_2+J_A(z_1))\right>\\
&+2\|J_B(y_2+J_A(y_1))-J_B(z_2+J_A(z_1))\|^2\\
\le &
\|\vy-\vz\|^2
-\|J_A(y_1)-J_A(z_1)\|^2
-\|J_B(y_2+J_A(y_1))-J_B(z_2+J_A(z_1))\|^2\\
&
-
2\left<y_2-z_2,J_B(y_2+J_A(y_1))-J_B(z_2+J_A(z_1))\right>\\
&+
2\left<y_2+J_A(y_1)-z_2-J_A(z_1),J_B(y_2+J_A(y_1))-J_B(z_2+J_A(z_1))\right>\\
= &
\|\vy-\vz\|^2
-\|J_A(y_1)-J_A(z_1)\|^2
-\|J_B(y_2+J_A(y_1))-J_B(z_2+J_A(z_1))\|^2\\
&+
2\left<J_A(y_1)-J_A(z_1),J_B(y_2+J_A(y_1))-J_B(z_2+J_A(z_1))\right>\\
 =&
 \|\vy-\vz\|^2
 -\|J_A(y_1)-J_A(z_1)
 \mathrel{\color{red}-}
 J_B(y_2+J_A(y_1))-J_B(z_2+J_A(z_1))\|^2\\
 \le&
 \|\vy-\vz\|^2.
\end{align*}
The first line follows from plugging in the definition of $\vU$ and expanding the squares.
The second line follows from separating the norm and inner product on $\reals^{2d}$ to separate norms and inner products on $\reals^d$.
The third line, the inequality, follows from applying the firm nonexpansiveness inequality twice, once for $J_A$ and once for $J_B$.
The fourth line follows from combining the two inner products.
The fifth line follows from completing the square. 
The final inequality follows from dropping the negative sum of square.
\qed

\subsection{Numerical examples}
Whether the splitting of Theorem~\ref{thm:attainment} is fast or efficient is somewhat beside the point.
The purpose of Theorem~\ref{thm:attainment} is to establish attainment of minimal lifting, and it says nothing about the rate of convergence.
% Whether the splitting of Theorem~\ref{thm:attainment} has any advantage over existing methods such as
% Indeed, there are other methods such as PPXA that can solve \eqref{eq:3op} using only resolvents, although they

Nevertheless, we present some experiments with the splitting of Theorem~\ref{thm:attainment} in this section.
These experiments are meant to be merely illustrative, and whether the splitting of Theorem~\ref{thm:attainment} has any advantage over existing methods such as PPXA and whether the notion of minimal lifting translates to any practical performance advantage is a question to be addressed in future work.

% to establish that the splitting of Theorem~\ref{thm:attainment} is,
% at least in some cases,  competitive with or even better than 
% the existing approach of copying variables and applying DRS.

\paragraph{Signal denoising with outliers.}
Consider the problem
\[
\begin{array}{ll}
\underset{x\in \reals^d}{\mbox{minimize}}&
\|x_S-a\|_1+\lambda \|Ux-b\|_1\\
\mbox{subject to}& x\ge 0,
\end{array}
\]
where $S\subseteq \{1,2,\dots,d\}$, $a\in \reals^{|S|}$, $b\in \reals^d$, and $U\in \reals^{d\times d}$
is a unitary matrix representing a wavelet transform.
The statistical interpretation is that we noisily observe $x$ on a subset $S$ of its indices, noisily observe $x$ in the wavelet domain,
and have a priori knowledge that $x$ is nonnegative.
The $\ell^1$-norm is used for robustness against outliers.
We reformulate this problem as
\[
\underset{x\in \reals^d}{\mbox{minimize}}
\quad
\underbrace{\|x_S-a\|_1}_{=f(x)}
+
\underbrace{\lambda \|Ux-b\|_1}_{=g(x)}
+
\underbrace{\delta_{\reals_+^d}(x)}_{=h(x)}
\]
and apply the splitting of Theorem~\ref{thm:attainment}, PPXA, and PDHG
with $A=\partial f$, $B=\partial g$, and $C=\partial h$.
Because $U$ is unitary, $J_{\alpha \partial g}$ has a closed-form formula.
For the experiments, we used synthetic data with  $d = 2^{20}$
and $|S|\approx d/5$.
The code for data generation and optimization is provided on the author's website for scientific reproducibility.

Figure~\ref{fig:denoising} shows the results.
The splitting of Theorem~\ref{thm:attainment}, which uses $2$-fold lifting, is competitive with PPXA and PDHG, which use $3$-fold lifting.
For all methods, the parameters were roughly tuned for best performance.
We do not plot distance to solution, since solution does not seem to be unique.

\begin{figure}[h]
\begin{center}
\includegraphics[width=.49\textwidth]{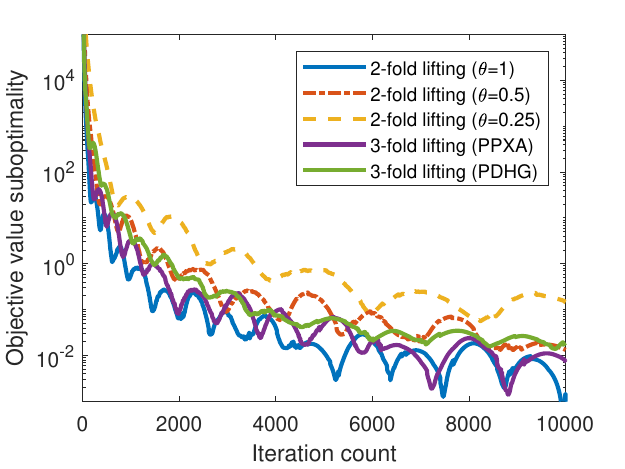}
\includegraphics[width=.49\textwidth]{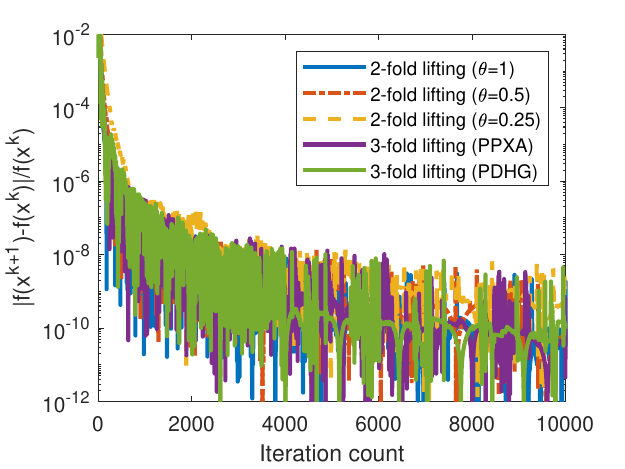}
\caption{Objective value and $|f(x^{k+1})-f(x^{k})|/f(x^{k})$ vs.\ iterations for the denoising problem.
%Theorem~\ref{thm:attainment} proves convergence for $\theta\in (0,1)$.
}
\label{fig:denoising}
\end{center}
\end{figure}

%\begin{figure}[h]
%\begin{center}
%\includegraphics[width=1\textwidth]{portfolio.pdf}
%\caption{Distance to solution vs.\ iterations for the portfolio optimization problem.
%Theorem~\ref{thm:attainment} proves convergence for $\theta\in (0,1)$.}
%\label{fig:portfolio}
%\end{center}
%\end{figure}
%

\begin{figure}
\begin{center}
\includegraphics[width=.49\textwidth]{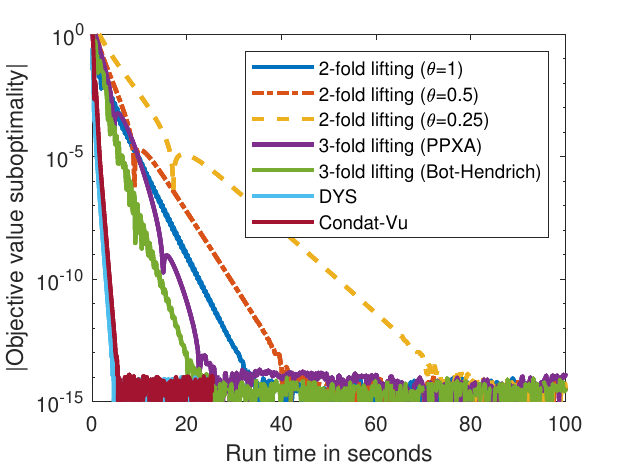}
\includegraphics[width=.49\textwidth]{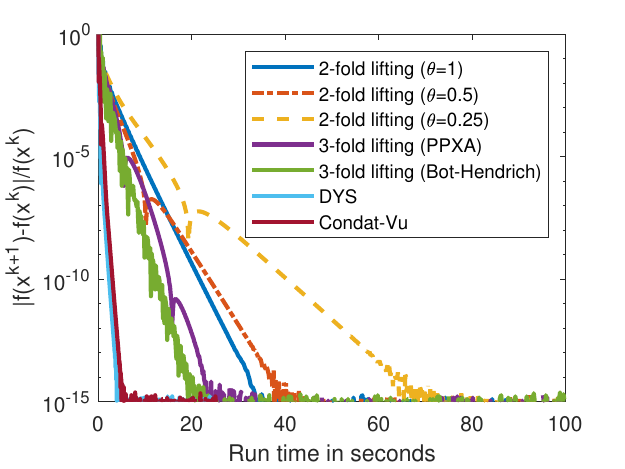}
\includegraphics[width=.49\textwidth]{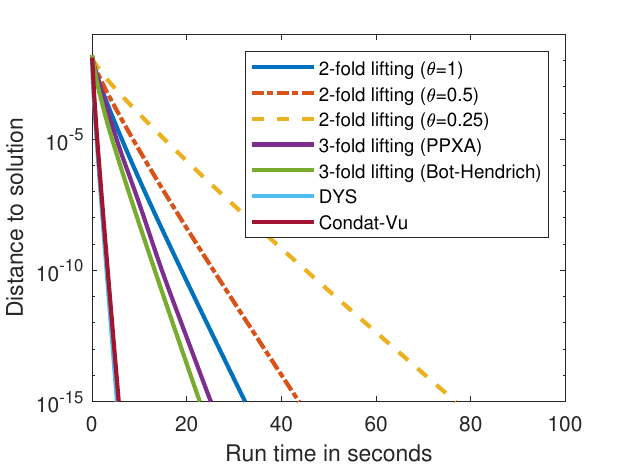}
\caption{$|$Objective value suboptimality$|$, $|f(x^{k+1})-f(x^{k})|/f(x^{k})$, and distance to solution vs.\ iterations for the portfolio optimization problem.
We take the absolute value  in the first plot,
% in plotting the objective value suboptimality, 
 because the slightly infeasible iterates produce objective values lower than the optimal value.
The rough cost per iteration is $0.025s$ for CV and DYS
and $0.15s$ for  the splitting of Theorem~\ref{thm:attainment}, PPXA, and Bot--Hendrich.
}
\label{fig:portfolio}
\end{center}
\end{figure}

\paragraph{Portfolio optimization.}
Consider the Markowitz portfolio optimization \cite{brodie2009} problem
\[
\begin{array}{ll}
\underset{x\in \reals^d}{\mbox{minimize}}&
(1/2)\sum^n_{i=1}(a_i^Tx-b)^2\\
\mbox{subject to}&x\in \mathrm{\Delta}\\
&
\mu^Tx\ge b,
\end{array}
\]
where $d$ is the number of assets, $a_1,\dots,a_n\in \reals^d$ are $n$ realizations of the returns on the assets,
$\mathrm{\Delta}=\{x\in \reals^d\,|\,x_1,\dots,x_d\ge 0,\,x_1+\dots+x_d=1\}$
is the standard simplex for portfolios with no short positions,
$\mu\in \reals^d$ is the (estimated) average return of the assets,
and $b\in \reals$ is the desired expected return.
We reformulate this problem as
\[
\underset{x\in \reals^d}{\mbox{minimize}}
\quad
\underbrace{\frac{1}{2}\sum^n_{i=1}(a_i^Tx-b)^2}_{=f(x)}
+
\underbrace{\delta_\mathrm{\Delta}(x)}_{=g(x)}
+
\underbrace{\delta_{\{x\,|\,\mu^Tx\ge b\}}(x)}_{=h(x)}
\]
and apply the splitting of Theorem~\ref{thm:attainment} and PPXA with $A=\nabla f$, $B=\partial g$, and $C=\partial h$.
We also run the method of Bo\c{t} and Hendrich \cite[Algorithm 3.1]{bot_primal-dual_2013}, which has been used to solve portfolio optimization problems \cite{bot_portfolio2015}.
%with $f$, $g$, and $h$ as $f$, $g_1$, and $g_2$ in the notation of \cite{bot_primal-dual_2013}.
To evaluate $J_{\alpha\nabla f}$, we compute the Cholesky factorization of $(I+\alpha A^TA)$ once
and use the direct formula
\[
J_{\alpha\nabla f}(z)=
(I+\alpha A^TA)^{-1}(z+\alpha A^Tb\mathbf{1})
\]
where
\[
A=\begin{bmatrix}
a_1^T\\
\vdots\\
a_n^T
\end{bmatrix},\qquad
\mathbf{1}=\begin{bmatrix}
1\\
\vdots\\
1
\end{bmatrix}.
\]
Finally, we also run DYS \cite{davis2017} and Condat--V\~u \cite{condat2013,vu2013},
for which direct evaluations of $\nabla f$ were used  instead of $J_{\alpha\nabla f}$.
To compute the projection onto the simplex, we use the algorithm and code of
\cite{chen2011}.
For the experiments, we used synthetic data with $n= 30000$ and $d = 10000$,
which make the data approximately 2GB in size.
The Cholesky factorization of $I+\alpha A^TA$ requires about 30 minutes to compute for this problem size.
The code for data generation and optimization is provided on the author's website for scientific reproducibility.

Figure~\ref{fig:portfolio} shows the results.
The splitting of Theorem~\ref{thm:attainment}, which uses $2$-fold lifting, is competitive with PPXA and Bo\c{t}--Hendrich, which use $3$-fold lifting.
However, DYS and Condat--V\~u are faster than the splittings that only use resolvents.
Run-time measurements exclude the time it took to compute the Cholesky factorization, about $35.9s$.
An Intel Core i7-2600 CPU operating at 3.40GHz was used for the experiments.
For all methods, the parameters were roughly tuned for best performance.

%XXX
%DYS parameter
%alpha = 6e-10;
%XXX
%XXX
%CV parameter
%alpha = 6e-10;
%beta = 1/alpha/2;
%XXX

\begin{figure}
\begin{center}
\includegraphics[width=.49\textwidth]{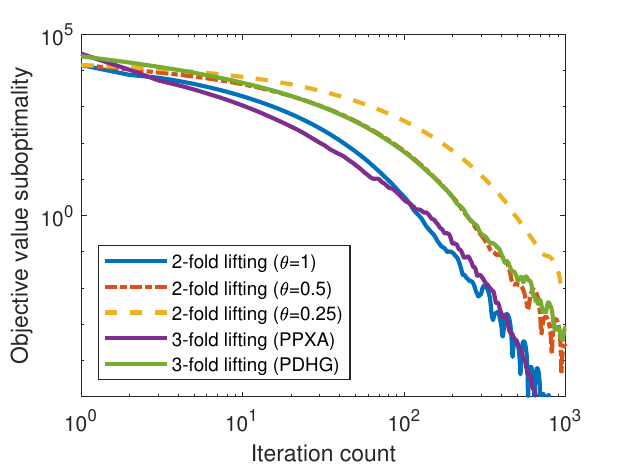}
\includegraphics[width=.49\textwidth]{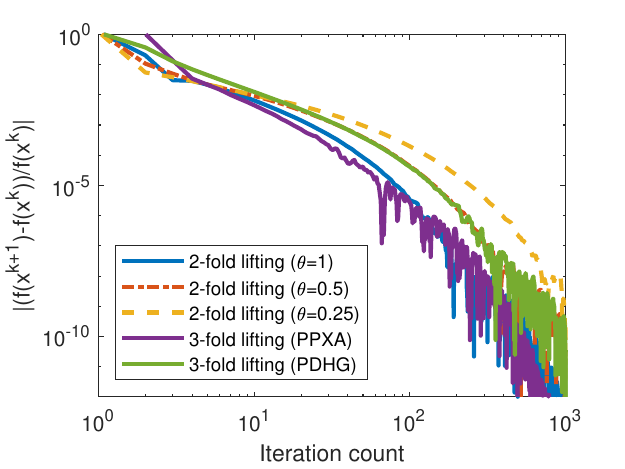}
\includegraphics[width=.49\textwidth]{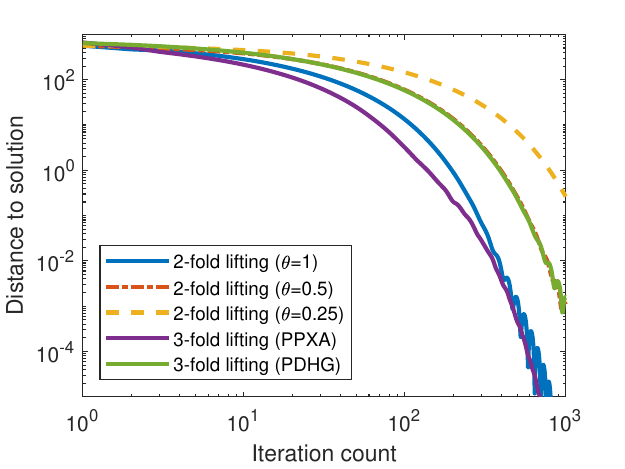}
\caption{Objective value, $|(f(x^{k+1})-f(x^{k}))/f(x^{k})|$, and distance to solution vs.\ iterations for the Poisson denoising with 1D total variation problem.
}
\label{fig:fused_lasso}
\end{center}
\end{figure}

\paragraph{Poisson denoising with 1D total variation}
Consider the problem
\[
\begin{array}{ll}
\underset{x\in \reals^d}{\mbox{minimize}}&
\lambda
\sum^d_{i=1}\ell(x_i;y_i)
+\sum_{i=1}^{d-1}
|x_{i+1}-x_i|
\end{array}
\]
where $y\in \mathbb{R}^d$, $\lambda>0$, and
\[
\ell(x;y)=\left\{
\begin{array}{ll}
-y^T\log (x)+x&\text{ for }y>0,\,x>0\\
0&\text{ for }y=0,\,x\ge 0\\
\infty&\text{ otherwise.}
\end{array}
\right.
\]
%\cite{tibshirani2005,rapaport2008,tibshirani2008,hoefling2010,liu2010,nowak2011,ye2011,zhou2012}.
The statistical interpretation is that we wish to recover a 1D signal with small total variation corrupted by Poisson noise.
The first term is the negative log-likelihood for Poisson random variables \cite{byrne_poisson_1993,le_poisson_2007,chaux_poisson_2009,zanella_poisson_2009} and the second term is the 1D total variation penalty, also called fused lasso in the statistics literature \cite{tibshirani2005,rapaport2008,tibshirani2008}.
1D total variation denoising has been studied in \cite{Barbero:2011:FNM:3104482.3104522,5981401,6288032,WAHLBERG201283,condat2013_denoising}.
%1D total variation denoising with quadratic loss \cite{condat2013_denoising}.
For simplicity, assume $d$ is odd.
We reformulate this problem as
\[
\underset{x\in \reals^d}{\mbox{minimize}}
\quad
\underbrace{\lambda\sum^d_{i=1}\ell(x_i;y_i)
}_{=f(x)}
+
\underbrace{\sum_{i=1,3,\dots,d-2}
|x_{i+1}-x_i|}_{=g(x)}
+
\underbrace{\sum_{i=2,4,\dots,d-1}
|x_{i+1}-x_i|}_{=h(x)}
\]
and apply the splitting of Theorem~\ref{thm:attainment}, PPXA, and PDHG
with $A=\nabla f$, $B=\partial g$, and $C=\partial h$.
We compute $J_{\alpha\nabla f}$ with 
\[
J_{\alpha\nabla f}(z)=\frac{1}{2}
\left(z-\alpha\lambda +\sqrt{(z-\alpha\lambda)^2+4\alpha\lambda y}\right),
\]
where the operations are elementwise.
Although $f$ is differentiable, its domain is not closed and the gradient is not Lipschitz continuous.
Therefore, splittings that use $\nabla f$ are not applicable, unless a line-search is implemented.
%We also run PPXA \cite{combettes2008,combettes2011} and PDHG \cite{zhu08,pock2009,esser2010,ChaPoc11}.
For the experiments, we used synthetic data with $n= 3000$, $d = 1001$, and $\lambda=1$. 
The code for data generation and optimization is provided on the author's website for scientific reproducibility.

Figure~\ref{fig:fused_lasso} shows the results.
The splitting of Theorem~\ref{thm:attainment}, which uses $2$-fold lifting, is competitive with PPXA and PDHG, which use $3$-fold lifting.
For all methods, the parameters were roughly tuned for best performance.

\section{Conclusion}
This work establishes that 
DRS is the unique frugal, unconditionally convergent resolvent-splitting without lifting 
for the 2 operator problem
and that 
there is no resolvent-splitting without lifting for the 3 operator problem.
Furthermore, this work presents a 
novel, frugal, unconditionally convergent resolvent-splitting
for the 3 operator problem
that directly generalizes DRS.
This splitting proves that 2-fold lifting is the minimal lifting
necessary for the 3 operator problem.
In other words, the presented splitting is optimal
in terms of frugality and lifting.

The potential for future work based on the ideas presented in this work is large. Analyzing and establishing uniqueness or optimality
of other splittings is one direction of future work. Characterizing all splittings of a given setup is another.
In particular, there is no reason to believe
the splitting of Theorem~\ref{thm:attainment} is unique, so characterizing all
frugal, unconditionally convergent resolvent-splittings for the 3 operator problem
would be interesting.

\begin{acknowledgements}
I would like to thank Wotao Yin for helpful comments and suggestions.
I would also like to thank the anonymous associate editor and referees whose comments improved the paper significantly.
In particular, the signal denoising numerical example was suggested by one of the anonymous reviewers.
This work is supported in part by NSF grant DMS-1720237 and ONR grant
N000141712162.
\end{acknowledgements}

% BibTeX users please use one of
%\bibliographystyle{spbasic}      % basic style, author-year citations
\bibliographystyle{spmpsci}      % mathematics and physical sciences
\bibliography{3op}   % name your BibTeX data base

\end{document}